\newcommand{\Pb}{\mathbb{P}}
\newcommand{\dx}{\mathrm{d}}
\newcommand{\R}{\mathbb{R}}
\newcommand{\Z}{\mathbb{Z}}
\newcommand{\GUE}{\mathrm{GUE}}
\newtheorem{tthm}{Theorem}
\newtheorem{prop}{Proposition}[section]
\newtheorem{lem}[prop]{Lemma}
\newtheorem{defin}[prop]{Definition}
\newtheorem{rem}[prop]{Remark}
\author{
Peter Nejjar
}
\address{IST Austria, 3400 Klosterneuburg, Austria. }
\thanks{Research supported by ERC Advanced Grant No. 338804 and ERC Starting Grant No. 716117.}
\email{peter.nejjar@ist.ac.at}
\begin{document}

\vfill
\begin{abstract}
We consider the asymmetric simple exclusion process (ASEP) on $\Z$ with an initial data such that in the large time particle density $\rho(\cdot)$ a discontinuity at the origin is created, where the value of $\rho$ jumps from zero to one, but 
$\rho(-\varepsilon),1-\rho(\varepsilon) >0 $ for any $\varepsilon>0$. We consider the position of a particle $x_{M}$ macroscopically located at the discontinuity, and show that its limit law has a cutoff under $t^{1/2}$ scaling. 
Inside the discontinuity region, we show that a discrete product limit law arises, which bounds from above the limiting  fluctuations of $x_{M}$ in the general ASEP, and equals them  in the  totally ASEP.
\end{abstract}
\vfill
\title{Cutoff and discrete product structure in ASEP}

\date{}

\keywords{}
\maketitle 
\section{Introduction}
We consider the asymmetric simple exclusion process (ASEP) on $\Z$. In this model, particles move in $\Z$  and there is at most one particle per site. Each particle waits (independently of all other particles)  an exponential time (with parameter $1$) to attempt to move one unit step, which is a step   to the right with probability $p>1/2$, and a step 
to the left with probability $q=1-p$. The attempted jump is succesful iff the target site is empty (exclusion constraint). 
ASEP is a continuous time Markov process  with state space $X=\{0,1\}^{\Z}$ and we denote by $\eta_{\ell}\in X$ the particle configuration at time $\ell$, see \cite{Li85b} for the rigorous construction of ASEP.
If $p=1$ we speak of the totally ASEP (TASEP), whereas if $p\in(1/2,1)$ we speak of the partially ASEP (PASEP).

The hydrodynamical behavior of ASEP is well established: For ASEP starting from $\eta_{0}\in X$, assume that
 \begin{equation}  \label{partdens}
\lim_{N \to \infty}\frac{1}{N}\sum_{i \in \Z}\delta_{\frac{i}{N}}\eta_{0}(i)=\rho_{0}(\xi)\dx \xi,
\end{equation}
where $\delta_{i/N}$ is the dirac measure at $i/N$ and the convergence is in the sense of vague convergence of measures.
Then the large time density of the ASEP is given by 
 \begin{equation}  \label{partdens2}
\lim_{N \to \infty}\frac{1}{N}\sum_{i \in \Z}\delta_{\frac{i}{N}}\eta_{N}(i)=\rho(\xi)\dx \xi,
\end{equation}
where $\rho(\xi)$ is the unique entropy solution of the Burgers equation with initial data $\rho_{0}$.
Given an initial data $\eta_{0}$ we can assign a label (an integer) to each particle, and we denote by $x_{M}(t)$ the position at time $t$ of the particle with label $M$.
It has been an question of great interest to study the fluctuations  of $x_{M}(t)$ around its macroscopic limit given by \eqref{partdens2}, for various choices of $M$ and $\eta_{0}$, as well as closely related quantities 
like the height function and current of ASEP. TASEP turned out to be more accesible than PASEP, and it has been  of great interest to generalize results from TASEP to PASEP.

An important breakthrough  in this regard has been the paper \cite{TW08b}, where the authors consider ASEP with step initial data $\eta^{\mathrm{step}}=\mathbf{1}_{\{i\leq 0\}}$\footnote{In fact, they consider the initial data $\eta=\mathbf{1}_{\{i\geq 0\}}$ with particles having a drift to the left, which is equivalent.}. The limiting particle density \eqref{partdens2} then has a region of decreasing density, and  \cite{TW08b} shows that for a particle located in this region, its fluctuations around its macroscopic position are of order $t^{1/3}$ and given by the Tracy-Widom $F_{\GUE}$ distribution (Theorem 3 in \cite{TW08b}). This is commonly called  Kardar-Parisi-Zhang (KPZ) fluctuation behavior, see \cite{Cor11} for a review. For TASEP, this result had been shown earlier in \cite{Jo00b}, Theorem 1.6. The authors of  \cite{TW08b} also obtained the limit law of the rescaled position of the particle initially at position $-M$ ($M$ fixed), see Theorem \ref{convthm} below.
The results of  \cite{TW08b} were later extended to so-called (generalized) step Bernoulli initial data \cite{TW09b}, \cite{AB16b}. For stationary ASEP (where $\eta_{0}(i),i\in \Z$ are i.i.d. Bernoullis, with $\eta_{0}$ the initial configuration), \cite{A18} showed that the current fluctuations along the characteristics converge to the Baik-Rains distribution, again generalizing a result known for TASEP (\cite{FS06v}) to the general  ASEP. Considerable effort has also been devoted to (half-) flat initial data \cite{OQR16}, \cite{OQR18}, 
which  again are already understood for TASEP \cite{BFS07}, \cite{BFPS06}.

 An important feature of the macroscopic density $\rho$  is that it can have discontinuties (shocks). For example, if $\rho_{0}(\xi)=\lambda \mathbf{1}_{\xi\geq0}+\mu \mathbf{1}_{\xi<0},1 \geq \lambda>\mu\geq 0,$ then $\rho(\xi+(p-q)(1-\lambda-\mu))=\rho_{0}(\xi)$. In this example, and if  this density profile has been created by a random bernoulli initial data, it is known for the ASEP that at the shock,  the gaussian fluctuations  in the initial data supersede those of ASEP itself (see \cite{Li99}, Part III for a presentation of the relevant results). For shocks created by deterministic initial data, fluctuation results for particles at the shock position are to the best of our knowledge so far restricted to TASEP.   In \cite{BFS09} Proposition 1, a shock is studied which has a cutoff on the $t^{1/2}$ scaling.  In \cite{FN14}, the authors consider various TASEP initial data which create shocks between two regions of constant density (the aforementioned case), two regions of decreasing density and between a decreasing and constant density region (see Corollaries 2.5, 2.6, 2.7 in \cite{FN14}). The main result of \cite{FN14} is that the limiting  fluctuations of a particle at the shock are
 of product form. This product form is due to an \textit{asymptotic independence} the authors found by mapping TASEP to last passage percolation. The concept of asymptotic independence has then  appeared in subsequent works \cite{FN15}, \cite{Nej18},\cite{FGN18},\cite{FN17} on TASEP.
 
 Furthermore,  the paper \cite{F18} proved a generalization of Proposition 1 of \cite{BFS09}, where again a cutoff 
 under $t^{1/2}$ scaling is observed.  Additionally,  \cite{F18}  does not utilize the connection of TASEP to last passage percolation, but rather a coupling within TASEP that can be seen as an application of the coupling provided in Lemma 2.1 of \cite{Sep98c}.
 We use the same coupling (see \eqref{coupling}) , however for the general ASEP this provides only  an upper bound rather than an  identity (see \eqref{Y}, \eqref{TY}). Next to this coupling, a key probabilistic tool we use is slow-decorrelation \cite{Fer08}.
 
To the best of our knowledge, the aforementioned    product limit laws and cutoffs at shocks have so far not been observed  in PASEP, and this is the main contribution of this paper.  We  consider a situation where both a cutoff  under $t^{1/2}$ scaling  and a product form appears, the latter being an upper bound to the particle fluctuations in the general ASEP, which becomes an identity in TASEP, and  our  main result Theorem \ref{main} is new  even for TASEP.
 
 Specifically, 
 we will consider the initial data
\begin{equation}\label{IC}
x_{n}(0)=
\begin{cases}
-n-\lfloor (p-q)t\rfloor  \quad &\mathrm{for} \,  n \geq 1 \\
-n\quad &\mathrm{for}\, -\lfloor (p-q)t\rfloor \leq n \leq 0.\\
\end{cases}
\end{equation}
and denote by $(\eta_{\ell})_{\ell\geq 0}$ the ASEP started from this initial data. To be clear, the initial data \eqref{IC} is to be understood that for each fixed $t\geq 0 $,  we start ASEP with this initial data, and let it run up to time $t,$ and study the position of particles at time  $t$. In particular we have a sequence of initial configurations.

 This creates what we call a massive shock in the density $\rho,$  where its value jumps from $0$ to $1$, see Figure \ref{curvdens}.

\begin{figure}
 \begin{center}
   \begin{tikzpicture}
       \draw (0.4,0.7) node[anchor=south]{\small{$1$}};
  \draw [very thick, ->] (0,-0.5) -- (0,2.5);
    \draw (-0.1,2) node[anchor=east] {\small{$\rho_{0}(\xi)$}};
    \draw[red,thick ] (-2,1.3) -- (-1,1.3);
      \draw[thick,red] (0,1.3) -- (1,1.3);
\draw[very thick] (-1,-0.1)--(-1,0.1)  ;
  \draw (-1,-0.6) node[anchor=south]{\small{$q-p$}};
  \draw[very thick] (1,-0.1)--(1,0.1)  ;
  \draw (1,-0.6) node[anchor=south]{\small{$p-q$}};
\filldraw(0,1.3) circle(0.08cm);
   \draw [very thick, ->] (-2,0) -- (2,0) node[below=4pt] {\small{$\xi$}};

\begin{scope}[xshift=7cm]
 \draw [very thick,->] (-2,0) -- (2,0) node[below=4pt] {\small{$\xi$}};
  \draw [very thick,->] (0,-0.5) -- (0,2.5);
    \draw (0.3,1.3) node[anchor=south]{\small{$1$}};
    \draw[red,thick] (-2,1.3) -- (0,0);
    \draw[red,thick] (0,1.3)--(2,0);
    \draw[very thick] (-1,-0.1) -- (-1,0.1);
       \draw (-0.8,-0.6) node[anchor=south]{\small{$q-p$}};
    \draw[very thick] (1,-0.1) -- (1,0.1);
       \draw (1.2,-0.6) node[anchor=south]{\small{$p-q$}};
\filldraw(0,1.3) circle(0.08cm); 
    \draw (-0.1,2) node[anchor=east] {\small{$\rho(\xi)$}};
   \end{scope}
   \end{tikzpicture}    \end{center} \caption{Left: The  initial particle density $\rho_{0}$ of  \eqref{partdens} for the initial configuration \eqref{IC}.
     Right: The large time particle density $\rho$ of \eqref{partdens2} for the same initial configuration. At the origin, $\rho$ jumps from $0$ to $1$, and $\rho(-\varepsilon),1-\rho(\varepsilon)>0$ for any $\varepsilon>0$. }\label{curvdens}
\end{figure}
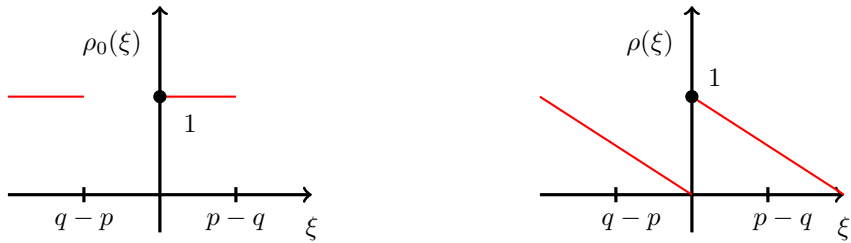

Before coming to the main result of this paper (Theorem \ref{main}), let us define the probability distribution functions which appear in it.

\begin{defin}[ \cite{TW08b},\cite{GW90}] Let $s \in \R,M \in \Z_{\geq 1}$. We define for $p\in (1/2,1)$  
\begin{equation} \label{def1}
F_{M,p}(s)=\frac{1}{2 \pi i} \oint_{| \lambda|> (q/p)^{-M+1}} \frac{\dx \lambda}{\lambda} \frac{\det(I-\lambda K)}{\prod_{k=0}^{M-1}(1-\lambda (q/p)^{k})}
\end{equation}
where $K= \hat{K}\mathbf{1}_{(-s,\infty)}$  and 
$\hat{K}(z,z^{\prime})=\frac{p}{\sqrt{2 \pi}}e^{-(p^{2}+q^{2})(z^{2}+z^{\prime 2})/4+pqzz^{\prime}}$. For $p=1$, we define
\begin{equation}
F_{M,1}(s)=\Pb\left(\sup_{0=t_{0}<\cdots <t_{M}=1}\sum_{i=0}^{M-1}[B_{i}(t_{i+1})-B_{i}(t_{i})]\leq s    \right),
\end{equation}
where $B_{i},i=0,\ldots,M-1$ are independent standard brownian motions.\end{defin}
 It follows from \cite{Bar01}, Theorem 0.7 that $F_{M,1}$ equals the distribution function of the largest eigenvalue of a $M \times M\, \GUE$  matrix. What is important to us here is that $F_{M,p}$ arises as limit law in ASEP,
 a result we cite in Theorem \ref{convthm} below.

Next we come to the aforementioned coupling. We define the  initial data
\begin{equation}
x_{n}^{A}(0)=
-n-\lfloor (p-q)t\rfloor  \quad \mathrm{for} \quad   n \geq 1 
\end{equation}
and
\begin{equation}
x_{n}^{B}(0)=
-n  \quad \mathrm{for} \quad   n \geq  -\lfloor (p-q)t\rfloor,
\end{equation}
and denote by $ (\eta_{\ell}^{A})_{\ell  \geq 0},(\eta_{\ell}^{B})_{\ell  \geq 0}$ the ASEPs started from these initial data.
We set for $n\geq 1$
\begin{equation}\label{coupling}
Y_{n}(t)=\min\{x_{n}^{A}(t),x_{n}^{B}(t)\}.
\end{equation}
In TASEP, we have

\begin{equation}\label{TY}
Y_{n}(t)=x_{n}(t),
\end{equation}
whereas for ASEP we have 
\begin{equation}\label{Y}
Y_{n}(t)\geq x_{n}(t).
\end{equation}
The following  is the main result of this paper. By \eqref{Y} and \eqref{TY}, \eqref{prodform} equals the limiting behavior of $x_{M}$ for TASEP, and provides an upper bound for it in the general ASEP.
\begin{tthm}\label{main}
We have that 
\begin{equation}\label{cutoff}
\lim_{t \to \infty}\Pb(x_{M}(t)\geq -(p-q)s t^{1/2})=
\begin{cases}
0  \quad &\mathrm{for} \,  s <0 \\
F_{M,p}(s) \quad &\mathrm{for}\, s>0.\\
\end{cases}
\end{equation}
Let $R \in \Z$. Then 
\begin{equation}\label{prodform}
\lim_{t \to \infty}\Pb(Y_{M}(t)\geq -R)=\begin{cases} F_{M,p}(0)&\mathrm{for} \,  R \geq M \\
F_{M,p}(0)F_{M-R,p}(0) &\mathrm{for} \,  R < M.
\end{cases}
\end{equation}
\end{tthm}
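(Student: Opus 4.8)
The plan is to reduce everything to the two step-initial-data systems $A$ and $B$ entering the coupling \eqref{coupling}, to understand each marginal separately, and to prove that the two are asymptotically independent; Theorem \ref{main} then follows by combining three facts: the $t^{1/2}$-scale behaviour of $x^A_M$, the fine ($O(1)$) behaviour of $x^B_M$, and their asymptotic independence. The guiding picture is that, writing $a=\lfloor(p-q)t\rfloor$, system $A$ is the standard step data shifted left by $a$, so its rarefaction fan macroscopically occupies $[-2(p-q),0]$ and particle $M$ sits at its \emph{right} edge (density $\to 0$); system $B$ is the standard step data shifted right by $a$, its fan occupies $[0,2(p-q)]$ and particle $M$ sits at its \emph{left} edge (density $\to 1$). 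Both are macroscopically at the origin, which is exactly the shock, but they feel fluctuations of different orders.

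First I would treat the marginals. For $A$, shift invariance gives $x^A_M(t)\stackrel{d}{=}\hat x_M(t)-a$ for $\hat x_M$ the $M$-th particle of the standard step data, and since $(p-q)t-a=O(1)$, Theorem \ref{convthm} directly yields $\lim_t\Pb(x^A_M(t)\ge-(p-q)st^{1/2})=F_{M,p}(s)$ for every $s$. For $B$ the particle is in the bulk ($\approx a$ particles lie to its right), so I would pass to particle--hole duality. Counting particles to the right of $-R$ rewrites the event as a net-current statement for the standard step data through the bond near $-a-R$, and reflecting ($w\mapsto-w$, $p\leftrightarrow q$) identifies this with the current of the dual step data $\eta^{*}$ (again step data with drift $p$), giving
\begin{equation}
\{x^B_M(t)\ge -R\}=\{x^{*}_{M-R}(t)\ge a+R\},
\end{equation}
with $x^{*}_{\ell}$ the $\ell$-th dual particle from the right. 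Applying Theorem \ref{convthm} to $\eta^{*}$ at the threshold $a+R=(p-q)t+O(1)$ (i.e. $s\to0^{+}$) yields $\lim_t\Pb(x^B_M(t)\ge-R)=F_{M-R,p}(0)$ for $R<M$ and $=1$ for $R\ge M$ (the index $M-R\le0$ voids the constraint). In particular $x^B_M(t)$ is tight, so $x^B_M(t)/t^{1/2}\to0$ in probability.

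With the marginals in hand the cutoff \eqref{cutoff} is almost immediate from $x_M\le Y_M=\min\{x^A_M,x^B_M\}$. For $s<0$ the threshold $-(p-q)st^{1/2}\to+\infty$, so $\Pb(x_M\ge-(p-q)st^{1/2})\le\Pb(x^B_M\ge-(p-q)st^{1/2})\to0$ by tightness of $x^B_M$; for $s>0$ the upper bound $\Pb(x_M\ge\cdot)\le\Pb(x^A_M\ge\cdot)\to F_{M,p}(s)$ is equally immediate. The matching lower bound is where the generality of ASEP bites. In TASEP, \eqref{TY} gives $x_M=Y_M$ and, since the tight $x^B_M$ exceeds the negative target with probability tending to one, $\Pb(Y_M\ge-(p-q)st^{1/2})\to F_{M,p}(s)$ with no independence needed. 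For PASEP only \eqref{Y} holds, and I would close the gap by proving $0\le Y_M(t)-x_M(t)=o_{\Pb}(t^{1/2})$, so that $x_M$ and $Y_M$ share the same $t^{1/2}$-scale limit. This discrepancy vanishes identically in TASEP (where the left edge of the right block is frozen) but not in PASEP; controlling it — presumably through the monotonicity built into the coupling together with the tightness of $x^B_M$ — is the main obstacle for \eqref{cutoff}.

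Finally, \eqref{prodform} is exactly $\Pb(Y_M\ge-R)=\Pb(x^A_M\ge-R,\ x^B_M\ge-R)$, and the content is that these two events asymptotically factor; combined with the marginals this gives $F_{M,p}(0)$ for $R\ge M$ and $F_{M,p}(0)F_{M-R,p}(0)$ for $R<M$. This asymptotic independence is the heart of the matter, and since the last-passage representation used in TASEP is unavailable I would establish it probabilistically via slow-decorrelation \cite{Fer08}. The geometric input is that the fluctuations of $x^A_M(t)$ and $x^B_M(t)$ can be transported back along their characteristics: that of $A$ (speed $p-q$) is at position $\approx-(p-q)\e t$ at time $(1-\e)t$, while that of $B$ (speed $-(p-q)$) is at $\approx+(p-q)\e t$, i.e. into two spatially separated regions whose relevant randomness comes from disjoint families of Poisson clocks. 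The hard part will be making this transfer rigorous at the required resolution — $t^{1/2}$ for the $A$-factor and $O(1)$ for the $B$-factor — simultaneously for the coupled pair.
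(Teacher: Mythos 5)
Your architecture matches the paper's: the same coupling, the same marginal computations via particle--hole duality, and the same slow-decorrelation/disjoint-Poisson-clocks strategy for the product form. But at both places where you yourself flag ``the main obstacle'' and ``the hard part,'' the proposal stops short of an argument, and these are precisely the two technical cores of the proof; moreover, the ingredients you suggest for the first one would not suffice. For the lower bound in \eqref{cutoff} with $s>0$ you propose to show $Y_M(t)-x_M(t)=o_{\Pb}(t^{1/2})$ ``through the monotonicity built into the coupling together with the tightness of $x^B_M$.'' Monotonicity gives $x_M\le Y_M\le x^B_M$, and tightness of $x^B_M$ only caps $x_M$ from \emph{above}; what is needed is a \emph{lower} bound on $x_M$, i.e.\ a proof that blocking by the $B$-block cannot push particle $M$ down by order $t^{1/2}$, and neither suggested ingredient addresses this. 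The paper's solution is a two-step argument you would need to supply: (i) a chain-of-discrepancies analysis via the stopping times $\tau_i=\inf\{\ell: x_i(\ell)\neq x_i^A(\ell)\}$, showing that on $\{x_M(t)\neq x_M^A(t)\}$ necessarily $0<\tau_1<\cdots<\tau_M\le t$ and at each $\tau_i$ particle $i$ sits immediately behind particle $i-1$; and (ii) a comparison of each particle, from its blocking time onward, with a reversed-step ASEP, combined with the estimate of Proposition \ref{block} (proved by coupling with the invariant product blocking measure with marginals $\frac{c(p/q)^i}{1+c(p/q)^i}$) that such a particle falls below $-t^{1/4}$ before time $t$ only with probability $O(e^{-C_2t^{1/8}})$. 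Iterating gives $x_M(t)\ge -(M+1)t^{1/4}$ with high probability on the discrepancy event, which is what closes the cutoff.

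The second gap is the asymptotic independence itself. Factorizing the joint law of $x^A_M(t-t^\nu)$ and $x^B_M(t)$ by suppressing Poisson clocks in two disjoint space-time regions requires showing that the suppression is not felt, i.e.\ uniform-in-time localization of whole trajectories: $\sup_{0\le\ell\le t-t^\nu}x_1^A(\ell)< -(p-q)t^\nu+t^{1/2+\varepsilon}$ and $\inf_{0\le\ell\le t}H_1^B(\ell)> -t^{1/2+\varepsilon}$ with probability tending to one. Theorem \ref{convthm} only controls fixed times; the passage from a fixed-time bound to a supremum over $[0,t]$ is again done in the paper by the reversed-step comparison plus Proposition \ref{block} (once $x_1^A$ exceeds the threshold, it dominates a reversed-step ASEP started there and cannot retreat far, contradicting the endpoint bound). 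So the single idea missing from your proposal --- comparison with the invariant blocking measure --- is the engine behind both of the steps you left open; without it, neither the discrepancy control nor the disjoint-clocks factorization can be completed. The parts you do carry out (the marginals for $A$ and $B$, the $s<0$ cutoff via the hole process together with $\lim_{M\to\infty}F_{M,p}(s)=0$, and the reduction of \eqref{prodform} to independence plus slow decorrelation) do agree with the paper's proof.
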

Since  $\lim_{R\to - \infty }F_{M-R,p}(0)=0,$ (see Proposition \ref{propprop}), the product structure \eqref{prodform} interpolates between the two values of the cutoff \eqref{cutoff} if one sends $s\searrow0$ in \eqref{cutoff}.
Furthermore, $x_{M}$ does not enter the shock with probability $1-F_{M,p}(0)$ and this probability goes to $1$ as $M\to \infty$.

The following convergence result is the starting point  for the present work.
 For PASEP, the following Theorem  was shown in \cite{TW08b}, Theorem 2, for TASEP, the result follows e.g.  from \cite{GW90}, Corollary 3.3, see Remark 3.1 of \cite{GW90} for further references . An alternative characterization of the limit \eqref{ASEPlimit}
 was given in \cite{BO17}, Proposition 11.1.
\begin{tthm}[Theorem 2 in \cite{TW08b},  Corollary 3.3 in \cite{GW90}]\label{convthm}
Consider ASEP with step initial data $x_{n}^{\mathrm{step}}(0)=-n,n\geq 1$.
Then  for every fixed $M\geq 1$ we have that 
\begin{equation}\label{ASEPlimit}
\lim_{t \to \infty}\Pb\left(x_{M}^{\mathrm{step}}(t)\geq (p-q)(t-st^{1/2})\right)=F_{M,p}(s).
\end{equation}
\end{tthm}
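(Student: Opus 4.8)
\section*{Proof proposal for Theorem~\ref{convthm}}

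The statement is Theorem~2 of \cite{TW08b}, and I would establish it by a steepest-descent analysis of the exact Bethe-ansatz formula; what follows is a sketch of the main steps. Write $\tau=q/p\in(0,1)$ and $\gamma=p-q$. The coordinate Bethe ansatz for ASEP with step initial data furnishes a determinantal representation of the one-point distribution of the form
\[
\Pb\bigl(x_{M}^{\mathrm{step}}(t)\geq x\bigr)=\frac{1}{2\pi\I}\oint_{\mathcal C_{t}}\frac{\dx\lambda}{\lambda}\,\frac{\det(I-\lambda K_{x,t})}{\prod_{k=0}^{M-1}(1-\lambda\tau^{k})},
\]
where $K_{x,t}$ is an explicit kernel acting on $L^{2}$ of a small circle about the origin whose entries carry the factor $\xi^{x}e^{t\epsilon(\xi)}$, with $\epsilon(\xi)=p\xi+q/\xi-1$ the single-particle Bethe eigenvalue, divided by the scattering term $p+q\xi\xi'-\xi'$, and where $\mathcal C_{t}$ encircles the poles $\lambda=\tau^{-k}$, $0\le k\le M-1$. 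The first step is simply to record this identity and the precise shape of $K_{x,t}$; this purely algebraic input I would cite rather than re-derive.

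Next I insert the scaling $x=\gamma(t-st^{1/2})$, so that the kernel entries carry $e^{t\Phi(\xi)}$ with leading phase $\Phi(\xi)=\gamma\log\xi+\epsilon(\xi)$, and carry out a saddle-point analysis of $\Phi$. One computes $\Phi'(\xi)=\gamma/\xi+p-q/\xi^{2}$, and $\Phi'(\xi)=0$ factors as $(p\xi-q)(\xi+1)=0$, so the relevant critical point is $\xi_{c}=\tau$; moreover $\Phi''(\tau)=p^{2}/q^{2}\neq0$, i.e.\ the saddle is \emph{nondegenerate}. This simple (not coalescing) saddle is the source of the $t^{1/2}$ scaling and of a Gaussian rather than an Airy limit, consistent with $F_{M,1}$ being the top eigenvalue of a \emph{finite} $M\times M$ $\GUE$ rather than the edge scaling limit. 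I would then rescale both kernel variables around the common saddle on the scale $t^{-1/2}$, say $\xi=\tau e^{w/\sqrt t}$, $\xi'=\tau e^{w'/\sqrt t}$: the quadratic expansion of the phase, together with the $\xi$-dependent prefactors of $K_{x,t}$, yields the Gaussian envelope $e^{-(p^{2}+q^{2})(w^{2}+w'^{2})/4}$, while the scattering denominator $p+q\xi\xi'-\xi'$ produces the cross term $e^{pq\,ww'}$; together these are exactly the kernel $\hat K$ of \eqref{def1}. The subleading $-\gamma s t^{1/2}$ in $x$ enters the phase linearly and relocates the effective integration domain, producing the cutoff, so that $K_{x,t}\to \hat K\,\mathbf{1}_{(-s,\infty)}=K$.

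Finally I would upgrade this pointwise kernel convergence to convergence of the Fredholm determinant and of the full contour integral. This requires uniform Hilbert--Schmidt and trace-norm bounds on $K_{x,t}$: along a steepest-descent contour through $\xi_{c}=\tau$ the function $\Re\Phi$ attains a strict maximum at the saddle, giving exponential decay of the tails that dominates the integrand and justifies the standard dominated-convergence argument for Fredholm determinants, whence $\det(I-\lambda K_{x,t})\to\det(I-\lambda K)$ uniformly on the $\lambda$-contour. The prefactor $\prod_{k=0}^{M-1}(1-\lambda\tau^{k})^{-1}$ is independent of $t$, and one checks that $\mathcal C_{t}$ deforms to the fixed contour $|\lambda|>\tau^{-M+1}$ without crossing poles, so the $\lambda$-integral passes to the limit and reproduces $F_{M,p}(s)$. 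The main obstacle is precisely this rigorous steepest descent: selecting admissible deformations of both the $\xi$- and the $\lambda$-contour through the saddle, and proving the uniform trace-norm estimates that license exchanging limit and determinant; for genuinely partial ASEP the kernel and scattering factor are heavier than in TASEP, where alternatively ($p=1$) the identity follows from the RSK/last-passage correspondence to the finite $\GUE$ edge as in \cite{GW90}.
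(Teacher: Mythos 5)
You should first note that the paper does not prove Theorem~\ref{convthm} at all: it is imported verbatim from the literature (Theorem~2 of \cite{TW08b} for $p\in(1/2,1)$, and \cite{GW90} together with \cite{Bar01} for $p=1$), so the only meaningful benchmark for your proposal is the Tracy--Widom proof itself. At the level of the roadmap you have reconstructed that proof correctly: one starts from the exact Fredholm determinant formula with kernel carrying $\xi^{x}e^{t\epsilon(\xi)}$ over the scattering factor $p+q\xi\xi'-\xi'$, inserts $x=(p-q)(t-st^{1/2})$, and performs steepest descent. Your explicit saddle data are also correct: $\Phi'(\xi)=0$ indeed factors as $(p\xi-q)(\xi+1)=0$, the relevant saddle is $\xi_{c}=\tau=q/p$, and $\Phi''(\tau)=p^{2}/q^{2}\neq 0$, which correctly explains the $t^{1/2}$ scale and the finite-$M$ $\GUE$-type limit rather than an Airy/coalescing-saddle limit.

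However, the central asymptotic mechanism is mis-stated, and the middle step of your sketch would fail as written. The scattering denominator does \emph{not} degenerate at the saddle: $p+q\tau^{2}-\tau=(p-q)^{2}/p^{2}>0$, and its zero set $\xi'=p/(1-q\xi)$ stays away from $(\tau,\tau)$. Hence under $\xi=\tau e^{w/\sqrt{t}}$, $\xi'=\tau e^{w'/\sqrt{t}}$ the denominator tends to a nonzero constant and produces neither the cross term $e^{pq\,ww'}$ nor the half-line $\mathbf{1}_{(-s,\infty)}$. Worse, since the $t$-dependent factor $\xi^{x}e^{t\epsilon(\xi)}$ involves only the variable $\xi$, the naively rescaled kernel converges to a kernel of product form, i.e.\ to a rank-one operator, whose Fredholm determinant $1-\lambda c$ cannot reproduce $\det(I-\lambda \hat K\mathbf{1}_{(-s,\infty)})$; and since $\epsilon(\tau)=0$ while $\Phi(\tau)=(p-q)\log\tau<0$, on a steepest-descent contour through $\tau$ the kernel is exponentially small, so your dominated-convergence step would literally give $\det(I-\lambda K_{x,t})\to 1$, and the residues of $\lambda^{-1}\prod_{k=0}^{M-1}(1-\lambda\tau^{k})^{-1}$ then make the $\lambda$-integral vanish --- not $F_{M,p}(s)$. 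The actual proof in \cite{TW08b} must first \emph{rewrite} the operator before any limit is taken: the resolvent-type factor $(p+q\xi\xi'-\xi')^{-1}$ is expanded (series/integral representation, which is what genuinely couples the two variables and, after the $s(p-q)t^{1/2}$ shift, generates the interval $(-s,\infty)$), and contours are deformed across the pole of the scattering factor with the resulting residue terms collected; only this combination of residue contributions and saddle-point analysis yields $\hat K\mathbf{1}_{(-s,\infty)}$ as in \eqref{def1}. So your plan matches the skeleton of the cited proof, but the step ``quadratic expansion plus scattering denominator $\Rightarrow\hat K$, then dominated convergence'' is not just left unproven --- it is incorrect as formulated, and this rearrangement of the determinant is precisely the nontrivial content of Theorem~2 of \cite{TW08b}. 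Your fallback for $p=1$ via \cite{GW90} and the $M\times M$ $\GUE$ identification of \cite{Bar01} is fine and is exactly what the paper cites.
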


\section{Preparatory  results}

Here we give two results needed later on. The first proposition asserts that for an ASEP started from an initial data where $\Z_{\geq 0}$ is initially occupied (but particles still have drift to the right), particles do not move 
too far to the left during a time interval $[0,t]$. 
\begin{prop}\label{block}
Consider  ASEP with reversed step  initial data $x_{-n}^{\mathrm{-step}}(0)=n,n\geq 0.$ Then  there is a $t_0$ such that for $t>t_{0}$ and constants $C_{1},C_{2}$ (which depend on $p$) we have
\begin{equation}\label{bms}
\Pb\left( \inf_{0\leq \ell \leq t} x_{0}^{\mathrm{-step}}(\ell)<-t^{1/4}\right)\leq C_{1}e^{-C_{2}t^{1/8}}.
\end{equation}
\end{prop}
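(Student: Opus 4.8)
The plan is to reduce the running-infimum bound to a fixed-time estimate for the front and then to control the left excursions of the front by exploiting that, macroscopically, the reversed-step data produces a \emph{stationary} shock: since the ASEP flux $f(\rho)=(p-q)\rho(1-\rho)$ is concave and the density jumps upward from $\rho_-=0$ to $\rho_+=1$ across the origin, the Rankine--Hugoniot speed is $(f(1)-f(0))/(1-0)=0$. Thus the leftmost particle $x_0^{\mathrm{-step}}$, which is the front of the solid block invading the vacuum, has zero mean velocity, and one expects its leftward excursions to be exponentially rigid rather than diffusive. For $p=1$ the statement is in fact trivial: the leftmost particle is permanently blocked on its right and cannot jump left, so $x_0^{\mathrm{-step}}(\ell)\equiv 0$; the content is entirely in the case $q>0$.

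First I would record the elementary structural facts. Because particles keep their order, $x_0^{\mathrm{-step}}$ is the leftmost occupied site at all times, so every site to its left is empty; hence its left clock (rate $q$) always produces a successful jump, while its right clock (rate $p$) is effective only when the second particle is not adjacent. Equivalently, $\{x_0^{\mathrm{-step}}(\ell)\le -a\}$ is the event that the net leftward flux across the bond $(-a,-a+1)$ has reached $1$ by time $\ell$, i.e.\ a single particle has travelled a distance $a$ upstream against the drift. This reformulation makes plausible a bound of the form $\Pb(x_0^{\mathrm{-step}}(\ell)\le -a)\le C(q/p)^{a}$, the factor $(q/p)^{a}$ being the gambler's-ruin cost of $a$ net upstream steps.

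The quantitative heart of the argument is therefore a fixed-time tail estimate $\Pb(x_0^{\mathrm{-step}}(\ell)\le -a)\le C e^{-c a}$, which I would obtain by a coupling that confines the front from the left. The naive comparisons fail in the wrong direction: coupling the front's clocks to a free biased walk $S$ (right rate $p$, left rate $q$) only gives $x_0^{\mathrm{-step}}(\ell)\le S(\ell)$, because blocking suppresses rightward jumps and can drag the front to the left of $S$; likewise attractiveness and particle-hole duality map the problem to itself and bound the front from above, not below. To capture the restoring effect I would instead control the leading cluster: whenever the front is blocked, the second particle carries its own right drift and, together with the rest of the block, relieves the blocking on an $O(1)$ timescale, so that over any interval the front's successful right jumps compensate its left jumps and no net leftward drift accumulates. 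Implementing this via a coupling of the first few particles near the front (or, alternatively, via a large-deviation estimate for the leftward current of the reversed-step ASEP) yields the exponential tail with a rate uniform, up to a polynomial prefactor, in $\ell$.

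Finally I would upgrade the fixed-time tail to the running infimum by a time-discretization: partition $[0,t]$ into $O(t)$ subintervals, apply the tail estimate at the grid points with $a\sim t^{1/4}$, and bound the within-subinterval displacement using that the number of jumps of the front in a short interval is Poissonian and hence concentrated; a union bound then gives a probability $\le t\,e^{-c t^{1/4}}$, which already improves on the claimed $C_1 e^{-C_2 t^{1/8}}$, so the stated exponent is comfortably non-optimal. The main obstacle is the fixed-time estimate itself, and specifically the interacting rigidity of the front: one must show that the leftward pushing produced by the second particle blocking the leftmost one does not accumulate into a net leftward drift. This is exactly the step where single-particle and attractiveness comparisons are inadequate and where the collective right drift of the invading block must be used.
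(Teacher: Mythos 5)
Your outer structure matches the paper's: reduce to a fixed-time tail bound for the front, take a union bound over $O(t)$ grid points in $[0,t]$, and control the within-interval excursions by Poisson concentration of the front's jump clock. That part is fine (and your remark that the exponent $t^{1/8}$ is not optimal is correct). But the core of the proposition --- a fixed-time estimate of the type $\Pb(x_0^{\mathrm{-step}}(\ell)\le -a)\le Ce^{-ca}$ holding uniformly in $\ell\le t$ --- is exactly what you do not prove. You correctly note that single-particle couplings and self-comparisons via attractiveness bound the front from the wrong side, and you correctly isolate the real issue (that the blocking exerted by the second particle must not accumulate into a net leftward drift), but your proposed remedies, ``a coupling of the first few particles near the front'' or ``a large-deviation estimate for the leftward current,'' are not arguments: the first is unspecified and cannot work as stated, since any fixed number of leading particles is itself repeatedly blocked by the bulk behind it, and the second is simply a restatement of the estimate to be proven. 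The gambler's-ruin factor $(q/p)^a$ is a heuristic for a single particle, not a bound for the interacting front.

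The paper closes precisely this gap with a tool you never invoke: the invariant \emph{blocking measure}. Let $\mu$ be the product measure on $\{0,1\}^{\Z}$ with marginals $\mu(\{\eta:\eta(i)=1\})=\frac{c(p/q)^{i}}{1+c(p/q)^{i}}$, which is invariant for ASEP (Liggett); here $c>0$ is a free parameter. Then: (i) a sample from $\mu$ dominates the reversed-step configuration $\mathbf{1}_{i\geq 0}$ except with probability of order $1/c$; (ii) by attractivity, domination persists for all times, so on that event the leftmost particle of the stationary process lies to the left of the reversed-step front at every time $\ell$; (iii) by invariance of $\mu$, the probability that the stationary leftmost particle is below $-a$ at time $\ell$ equals its value at time $0$, which is a direct product-measure computation of order $c\,(q/p)^{a}$. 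Choosing $c$ to balance (i) against (iii) --- the paper takes $c=(p/q)^{t^{1/8}}$ with $a=t^{1/4}/2$ --- yields the uniform-in-$\ell$ fixed-time bound, after which the union-bound/Poisson step you already have finishes the proof. Without this comparison (or an equally effective substitute), your proposal identifies the difficulty but does not resolve it.
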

\begin{proof}
We prove the proposition by comparing the reversed step initial data $\eta^{-\mathrm{step}}=\mathbf{1}_{i\geq0}$ with an invariant blocking measure $\mu$.
The measure $\mu$ on $\{0,1\}^{\Z}$ is the product measure with marginals
\begin{equation}
\mu(\{\eta:\eta(i)=1\})=\frac{c(p/q)^{i}}{1+c(p/q)^{i}},
\end{equation}
here $c>0$ is a free parameter we choose at the end of the proof. It is well known that $\mu$ is invariant for ASEP \cite{Lig76}.

Let $(\eta^{\mathrm{block}}_{s})_{s\geq 0}$  be the ASEP started from the initial distribution $\mu$, and denote by $x_{0}^{\mathrm{block}}(s)$ the position of the left most particle of $\eta^{\mathrm{block}}_{s}$.
Let $(\eta^{\mathrm{-step}}_{s})_{s\geq 0}$  be the ASEP started from the reversed step initial data  $\eta^{-\mathrm{step}}.$

Let us start by bounding for any fixed  $0\leq \ell \leq t$ 
\begin{equation}\label{equation}
\Pb\left(  x_{0}^{\mathrm{-step}}(\ell)<-t^{1/4}/2\right)\leq \frac{4}{c}\frac{1}{1-q/p}+4c\left(\frac{q}{p}\right)^{t^{1/4}/2}\frac{1}{1-q/p}.
\end{equation}
Consider the partial order on $\{0,1\}^{\Z}$ given by 
\begin{equation}
\eta \leq \eta^{\prime}\Leftrightarrow \eta(i)\leq\eta^{\prime}(i) \,\mathrm{\,for\,\,all\,\,}i\in \Z
\end{equation}
and use $\eta \not \leq \eta^{\prime}$ as short hand for the statement  that $\eta \leq \eta^{\prime}$ does not hold.
We can now bound
\begin{equation}
\begin{aligned}\label{un}
\Pb\left( x_{0}^{\mathrm{-step}}(\ell)<-t^{1/4}/2\right)&\leq \Pb\left( \{ x_{0}^{\mathrm{-step}}(\ell)<-t^{1/4}/2\}\cap \{\eta_{\ell}^{\mathrm{block}}\geq \eta_{\ell}^{\mathrm{-step}}\}\right)
\\&+\Pb(\eta_{\ell}^{\mathrm{block}}\not \geq \eta_{\ell}^{\mathrm{-step}}).
\end{aligned}
\end{equation}
Furthermore, we have\begin{equation}
\begin{aligned}\label{deux}
\Pb\left( \{ x_{0}^{\mathrm{-step}}(\ell)<-t^{1/4}/2\}\cap \{\eta_{\ell}^{\mathrm{block}}\geq \eta_{\ell}^{\mathrm{-step}}\}\right)&\leq \Pb\left(  x_{0}^{\mathrm{block}}(\ell)<-t^{1/4}/2\right)
\\&=\Pb\left(  x_{0}^{\mathrm{block}}(0)<-t^{1/4}/2\right)
\end{aligned}\end{equation}
where the identity \eqref{deux} follows from the invariance of $\mu$.

By attractivity of ASEP,
\begin{equation}\label{trois}
\Pb(\eta_{\ell}^{\mathrm{block}}\not \geq \eta_{\ell}^{\mathrm{-step}})\leq \Pb(\eta_{0}^{\mathrm{block}}\not \geq \eta^{\mathrm{-step}}).
\end{equation}
Using the simple estimates $\log(1+\varepsilon)\leq 2\varepsilon$ and $\exp(-\varepsilon)\geq 1-2\varepsilon$ for $\varepsilon\geq 0$
we obtain
\begin{equation}
\begin{aligned}\label{quatre}
\Pb(\eta_{0}^{\mathrm{block}}\not \geq \eta^{\mathrm{-step}})&=1-\prod_{i=0}^{\infty}\frac{1}{1+(q/p)^{i}/c}
\\&=1-\exp\left(-\sum_{i=0}^{\infty}\log(1+(q/p)^{i}/c)\right)
\\&\leq 1-\exp\left(-\sum_{i=0}^{\infty} 2(q/p)^{i}/c\right)
\\&=1-\exp\left(- 2/(c(1-q/p))\right)
\\&\leq \frac{4}{c}\frac{1}{1-q/p}.
\end{aligned}
\end{equation}
By a very similar computation we obtain
\begin{equation}\label{cinq}
\begin{aligned}
\Pb\left(  x_{0}^{\mathrm{block}}(0)<-t^{1/4}/2\right)&=1-\prod_{i=0}^{\infty}\frac{1}{1+c(q/p)^{t^{1/4}/2}(q/p)^{i}}
\\&\leq 4c\left(\frac{q}{p}\right)^{t^{1/4}/2}\frac{1}{1-q/p}.
\end{aligned}
\end{equation}

This proves \eqref{equation} by combining the inequalities \eqref{un},\eqref{deux},\eqref{trois},\eqref{quatre}and \eqref{cinq}.

Since \eqref{equation} does not depend on $\ell$, we obtain 
\begin{equation}\label{equation2}
\Pb\left( \bigcup_{\ell=1,2,\ldots,\lfloor t\rfloor } \{x_{0}^{\mathrm{-step}}(\ell)<-t^{1/4}/2\}\right)\leq t\left( \frac{4}{c}\frac{1}{1-q/p}+4c\left(\frac{q}{p}\right)^{t^{1/4}/2}\frac{1}{1-q/p}\right)
\end{equation}
Note further that for the event 
\begin{equation}\label{event}
\bigcap_{\ell=1,2,\ldots,\lfloor t \rfloor } \{x_{0}^{\mathrm{-step}}(\ell)\geq -t^{1/4}/2\}\cap\{ \inf_{0\leq \ell \leq t} x_{0}^{\mathrm{-step}}(\ell)<-t^{1/4}\}
\end{equation}
to hold, $x_{0}^{\mathrm{-step}}$ would need to make $t^{1/4}/2$ jumps to the left in a time interval $[\ell,\ell+1],\ell=0,\ldots,t-1$. 
For any fixed time interval $[\ell,\ell+1]$ the probability that  $x_{0}^{\mathrm{-step}}$ makes at least  $k$ jumps to the left is bounded by the probability that a rate $q$ Poisson process makes at least $k$ jumps in a unit time interval.
In particular, the probability that  $x_{0}^{\mathrm{-step}}$ makes $t^{1/4}/2$ jumps to the left during $[\ell,\ell+1]$ may be  bounded by $e^{-t^{1/4}/2}$ (for optimal bounds, see e.g.  \cite{G87}). Since there are $t$ such intervals, we see that the probability of the event \eqref{event} is bounded by $te^{-t^{1/4}/2}$.
So in total we obtain 
\begin{align}
\Pb( \inf_{0\leq \ell \leq t} x_{0}^{\mathrm{-step}}(\ell)<-t^{1/4})\leq &   \Pb\left( \bigcup_{\ell=1,2,\ldots,t} \{x_{0}^{\mathrm{-step}}(\ell)<-t^{1/4}/2\}\right)\\&+\Pb\left(\bigcap_{\ell=1,2,\ldots,t} \{x_{0}^{\mathrm{-step}}(\ell)\geq -t^{1/4}/2\}\cap\{ \inf_{0\leq \ell \leq t} x_{0}^{\mathrm{-step}}(\ell)<-t^{1/4}\}\right)
\\&\label{ja!}\leq   t\left( \frac{4}{c}\frac{1}{1-q/p}+4c\left(\frac{q}{p}\right)^{t^{1/4}/2}\frac{1}{1-q/p}+e^{-t^{1/4}/2}\right).
\end{align}
Choosing $c=(p/q)^{t^{1/8}}$ in \eqref{ja!} we obtain \eqref{bms} for $t $ sufficiently large.
\end{proof}

Finally, we need the following result  about the distribution function $F_{M,p}$.
\begin{prop}\label{propprop}

For any fixed  $s \in \R$  we have 
\begin{equation}\label{two1}
\lim_{M \to \infty}F_{M,p}(s)=0.
\end{equation}

\end{prop}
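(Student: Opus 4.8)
The plan is to combine the probabilistic meaning of $F_{M,p}$ given by Theorem \ref{convthm} with a uniform control on the number of step-ASEP particles near the front of the rarefaction fan. I first dispose of the case $p=1$, where the statement is transparent: by the cited result of \cite{Bar01}, $F_{M,1}$ is the distribution function of the largest eigenvalue $\lambda_{\max}^{(M)}$ of an $M\times M$ $\GUE$ matrix, and since $\lambda_{\max}^{(M)}$ dominates, e.g., the largest diagonal entry (a maximum of $M$ i.i.d.\ Gaussians) and hence tends to $+\infty$ in probability as $M\to\infty$, we get $F_{M,1}(s)=\Pb(\lambda_{\max}^{(M)}\le s)\to 0$ for every fixed $s$. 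The content is therefore the range $p\in(1/2,1)$, treated below.

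For $p\in(1/2,1)$ write $a_t=(p-q)(t-st^{1/2})$ and let $x_1^{\mathrm{step}}(t)>x_2^{\mathrm{step}}(t)>\cdots$ be the ordered positions of step ASEP. Since ASEP preserves the order of particles, the $M$-th particle lies to the right of $a_t$ if and only if at least $M$ particles do, i.e.
\begin{equation}
\Pb\bigl(x_M^{\mathrm{step}}(t)\ge a_t\bigr)=\Pb\bigl(N_t\ge M\bigr),\qquad N_t:=\#\{n\ge 1:\ x_n^{\mathrm{step}}(t)\ge a_t\}.
\end{equation}
By Theorem \ref{convthm} the left-hand side tends to $F_{M,p}(s)$, so $F_{M,p}(s)=\lim_{t\to\infty}\Pb(N_t\ge M)$. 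I will establish the uniform first-moment bound
\begin{equation}\label{firstmoment}
\limsup_{t\to\infty}\E[N_t]\le C(s,p)<\infty,
\end{equation}
whence Markov's inequality gives $\Pb(N_t\ge M)\le \E[N_t]/M$ and, taking $\limsup_t$, $F_{M,p}(s)\le C(s,p)/M$. Letting $M\to\infty$ then yields \eqref{two1}.

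The heart of the matter is \eqref{firstmoment}. Writing $\E[N_t]=\sum_{y\ge a_t}\Pb(\eta_t(y)=1)$ for the step-ASEP occupation probabilities, I split the sum at the front $\lfloor (p-q)t\rfloor$. For $s>0$ the near-front window $a_t\le y\le (p-q)t$ has width $(p-q)st^{1/2}+\Or(1)$, and on it the macroscopic density is the linear rarefaction profile $\rho(y/t)=\tfrac12\bigl(1-\tfrac{y/t}{p-q}\bigr)$, of size $\Or(t^{-1/2})$ throughout; integrating it over the window produces the finite hydrodynamic count $\tfrac{(p-q)s^2}{4}$. On the far region $y>(p-q)t$ the occupation probability decays (for $s\le 0$ only this region is present, and the count is even smaller). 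The main obstacle is to upgrade this hydrodynamic heuristic to a \emph{genuine} uniform-in-$t$ bound on $\sum_{y\ge a_t}\Pb(\eta_t(y)=1)$: one needs that the step-ASEP one-point function stays of order $t^{-1/2}$ across the whole $\Or(t^{1/2})$ window (not merely on compact rescaled sets) and is summably small beyond the front. Equivalently, the leading $\Or(t^{1/2})$ particles must concentrate around $(p-q)t$ with fluctuations of order $t^{1/2}$. A crude bound $\Pb(\eta_t(y)=1)\le \Pb(x_1^{\mathrm{step}}(t)\ge y)\le \Pb(\mathrm{Poisson}(pt)\ge y)$ controls only the region $y>pt$ and is too weak on $((p-q)t,pt)$; there the sharper upper-tail estimate on the step-ASEP current — from the same Tracy--Widom circle of results underlying Theorem \ref{convthm} — is genuinely needed.

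Granting \eqref{firstmoment}, the conclusion is immediate from the Markov step above. I expect \eqref{firstmoment} to be the only delicate point: the order-preservation reduction and the Markov estimate are routine, while the uniform density/current bound near the front is the real obstacle. The alternative characterisation of the limit \eqref{ASEPlimit} obtained in \cite{BO17}, Proposition 11.1, could also be exploited to read off the required tightness of $\{N_t\}$ directly.
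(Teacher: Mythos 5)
Your reduction via order preservation --- rewriting $\Pb\bigl(x_M^{\mathrm{step}}(t)\ge a_t\bigr)$ with $a_t=(p-q)(t-st^{1/2})$ as $\Pb(N_t\ge M)$ for the particle count $N_t$ --- is exactly the paper's starting point, but your argument has a genuine gap: the uniform first-moment bound $\limsup_{t\to\infty}\E[N_t]<\infty$, which you yourself flag as ``the heart of the matter,'' is never proved. Theorem \ref{convthm} cannot deliver it: that theorem gives, for each \emph{fixed} $M$, the limit of a single probability, whereas $\E[N_t]=\sum_{y\ge a_t}\Pb(\eta_t(y)=1)$ requires one-point density control of order $t^{-1/2}$ uniformly across a window of width $\Or(t^{1/2})$, plus summable decay beyond the front; no such estimate appears in the paper or in the results it quotes, and your Poisson bound admittedly controls only $y>pt$. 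As written, the proposal assumes the only nontrivial step, so it is incomplete.

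The gap is also avoidable, because the first-moment route is stronger than necessary: all one needs is tightness, $\lim_{M\to\infty}\limsup_{t\to\infty}\Pb(N_t\ge M)=0$, and this is precisely how the paper argues. By Proposition 11.1 of \cite{BO17}, the count $\chi_{s,t}=N_t$ converges in distribution, as $t\to\infty$, to a $\Z_{\geq 0}$-valued (hence a.s.\ finite) random variable $\xi_s$, whence
\begin{equation*}
\lim_{M\to\infty}F_{M,p}(s)=\lim_{M\to\infty}\lim_{t\to\infty}\Pb(\chi_{s,t}\ge M)=\lim_{M\to\infty}\Pb(\xi_s\ge M)=0,
\end{equation*}
with no moment or density estimate at all. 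Your closing sentence mentions exactly this as an ``alternative''; it is in fact the proof, and had you carried it out instead of the unproven density bound, your argument would coincide with the paper's. (Your treatment of $p=1$ via the growth of the largest GUE eigenvalue is fine and matches the paper's one-line remark for TASEP.)
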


\begin{proof}
For TASEP, this is obvious e.g. from the known convergence of $F_{M,1}$ to $F_{\GUE}$. 
For PASEP, instead of using definition \eqref{def1}, we use the alternative description provided in \cite{BO17}.
By Proposition 11.1 of \cite{BO17},   we have that for ASEP started from step initial data
\begin{equation}
\chi_{s,t}=\#\{\mathrm{particles \,\, to \,\,the \,\,right\,\, of\,} (p-q)(t-s t^{1/2}) \, \mathrm{at \,\, time \,\, } t\}
\end{equation}
converges, as $t\to \infty$ to a $\Z_{\geq 0}$ valued random variable $\xi_{s}.$ Using Theorem \ref{convthm}, we thus get
\begin{equation}
\lim_{M \to \infty}F_{M,p}(s)=\lim_{M\to \infty}\lim_{t \to \infty}\Pb(\chi_{s,t}\geq M)=\lim_{M\to \infty}\Pb(\xi_{s}\geq M)=0.
\end{equation} 
\end{proof}
\section{Asymptotic Independence and Slow Decorelation}
In this section we prove two Propositions essential to the proof of the main result Theorem \ref{main}. Proposition \ref{indepprop} establishes the asymptotic independence of $x_{M}^{A}(t-t^{\nu}),x_{M}^{B}(t)$ for $\nu\in (1/2,1)$, whereas Proposition \ref{prop0} controls the difference between $x_{M}^{A}(t-t^{\nu})$ and $x_{M}^{A}(t)$. 
\begin{prop}\label{indepprop}
Let $\nu \in (1/2,1),R \in \Z, C \in \R.$ Then 
\begin{equation*}
\lim_{t\to \infty}\Pb(\min\{x_{M}^{A}(t-t^{\nu})+(p-q)(t^{\nu}+Ct^{1/2}),x_{M}^{B}(t)\}\geq -R)=\begin{cases} F_{M,p}(C)&\mathrm{for} \,  R \geq M \\
F_{M,p}(C)F_{M-R,p}(0) &\mathrm{for} \,  R < M.
\end{cases}
\end{equation*} 
\end{prop}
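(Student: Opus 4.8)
The plan is to prove the two marginal limits separately and then to establish the asymptotic independence of the two events, so that
$\Pb(\min\{\cdots\}\ge -R)=\Pb(\{x_M^A(t-t^\nu)+(p-q)(t^\nu+Ct^{1/2})\ge -R\}\cap\{x_M^B(t)\ge -R\})$
factorizes in the limit. For the $A$-marginal I would use that $A$ is the step initial data translated by $-\lfloor(p-q)t\rfloor$, so by translation invariance $x_M^A(t-t^\nu)$ has the law of $x_M^{\mathrm{step}}(t-t^\nu)-\lfloor(p-q)t\rfloor$. Rewriting the event in the form $\{x_M^{\mathrm{step}}(t-t^\nu)\ge (p-q)((t-t^\nu)-s_t(t-t^\nu)^{1/2})\}$ and solving for $s_t$ gives $s_t=C\,t^{1/2}/(t-t^\nu)^{1/2}+O((t-t^\nu)^{-1/2})\to C$, where $\nu<1$ ensures $t^{1/2}/(t-t^\nu)^{1/2}\to 1$ and the fixed shift $R$ is washed out at the $t^{1/2}$ scale. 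Theorem \ref{convthm}, together with continuity of $F_{M,p}$, then yields the limit $F_{M,p}(C)$ for every fixed $R$.

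For the $B$-marginal I would first reduce the event to a current statement: since particles keep their order, $\{x_M^B(t)\ge -R\}$ equals $\{\#\{B\text{-particles at positions}\ge -R\text{ at time }t\}\ge M+\lfloor(p-q)t\rfloor+1\}$, which by conservation equals $\{\mathcal J_{-R}(t)\ge M-R\}$, where $\mathcal J_{-R}(t)$ is the net rightward current of $B$ across the bond $(-R-1,-R)$. For $R\ge M$ the threshold $M-R$ is $\le 0$ and the current, being a count that starts at zero, is nonnegative, so the probability is $1$. For $R<M$ I would pass to the particle--hole dual reflected across the origin: the holes of $B$, reflected, form standard step initial data $\hat\eta$ occupying $\{i\le -\lfloor(p-q)t\rfloor-1\}$, and the identity becomes $\mathcal J_{-R}(t)=\#\{\hat\eta\text{-particles}>R\text{ at time }t\}$. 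By translation this is the number of step particles to the right of $R+\lfloor(p-q)t\rfloor+1=(p-q)(t-s_t t^{1/2})$ with $s_t\to 0$, i.e. a quantity $\chi_{s_t,t}$ as in the proof of Proposition \ref{propprop}. The characterization there (via \cite{BO17} and Theorem \ref{convthm}) gives $\Pb(\chi_{s_t,t}\ge M-R)\to \Pb(\xi_0\ge M-R)=F_{M-R,p}(0)$; this is the edge (discrete) fluctuation, and it is where the index $M-R$ enters.

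The heart of the matter is the asymptotic independence of the two events, and this is precisely where the shift $t^\nu$, $\nu\in(1/2,1)$, enters: it creates a macroscopic spatial separation. At time $t-t^\nu$ the particle $x_M^A$ sits near $-(p-q)t^\nu$, and since the front is the rightmost point of $A$, the one-point upper-tail (moderate-deviation) bound for the front together with $t^\nu\gg t^{1/2}$ shows that the entire occupied region of $A$ stays in $\{x\le -\tfrac12(p-q)t^\nu\}$ throughout $[0,t-t^\nu]$ with probability tending to $1$; hence, up to a negligible event, $x_M^A(t-t^\nu)$ is a function of the clocks in $\{x\le -\tfrac12(p-q)t^\nu\}\times[0,t-t^\nu]$. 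On the other side, to the left of the origin $B$ stays in its density-one region throughout $[0,t]$, and holes there have a strictly leftward drift $p-q>0$; combined with the blocking estimate of Proposition \ref{block} (applied, after reflection, to the packed region of $B$), this should show that, up to a negligible event, $x_M^B(t)$ is a function of the clocks in $\{x\ge -\tfrac12(p-q)t^\nu\}\times[0,t]$. As the clocks on disjoint sets of bonds are independent, the two quantities become asymptotically independent, and multiplying the marginals gives the claimed formula.

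I expect the genuine obstacle to be this second localization: showing rigorously that $x_M^B(t)$ is insensitive to the clocks far to the left, i.e. that no discrepancy created in $\{x<-\tfrac12(p-q)t^\nu\}$ propagates through the density-one region and reaches $x_M^B$ near the origin. The favorable leftward hole drift makes this plausible, but turning it into a quantitative coupling estimate — controlling the probability that the interface of the packed region is disturbed as deep as $-\tfrac12(p-q)t^\nu$, for which Proposition \ref{block} is tailored — is the technical core. By comparison, the $A$-side localization and the matching of the deterministic centering constants are routine.
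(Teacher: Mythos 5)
Your strategy coincides with the paper's: identify the two marginal limits via Theorem \ref{convthm} (for $A$ by translation, for $B$ by passing to the holes $H_n^B$, which perform a step-initial-data ASEP with reversed drift --- your current/duality computation is a rephrasing of the paper's identity $\{H_{M-R}^{B}(t)<-R\}=\{x_{M}^{B}(t)\geq -R\}$), then make the two quantities functions of Poisson clocks in disjoint space-time regions and show that this modification changes nothing with probability tending to one; the paper suppresses clocks in $\{i\geq -(p-q)t^{\nu}+t^{1/2+\varepsilon}\}\times[0,t-t^{\nu}]$ for $A$ and in $\{i\leq -t^{1/2+\varepsilon}\}\times[0,t]$ for $B$, and concludes with the same factorization computation you describe. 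The case $R\geq M$ is also handled as you do.

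The genuine gap is your $A$-side localization, which you declare routine. You assert that a one-point upper-tail bound for the front, together with $t^{\nu}\gg t^{1/2}$, shows that the occupied region of $A$ stays in $\{x\leq -\tfrac12(p-q)t^{\nu}\}$ throughout $[0,t-t^{\nu}]$. A one-point statement does not bound $\sup_{0\leq\ell\leq t-t^{\nu}}x_{1}^{A}(\ell)$, and no quantitative tail estimate suitable for a union bound over times is available from the inputs used here: Theorem \ref{convthm} is a limit law without error bounds, and the trivial stochastic domination of the front by a rate-$p$ Poisson process gives speed $p$, not $p-q$, so it is useless at scale $t^{\nu}$. Converting the one-point law into a supremum bound is exactly the nontrivial step of the paper: on the event that $x_{1}^{A}$ reaches the level $X=-(p-q)t^{\nu}+t^{1/2+\varepsilon}$ at some time $\lambda_{1}$, one couples with an ASEP started from the reversed step configuration $\mathbf{1}_{i\geq X}$, whose leftmost particle $\hat{x}_{0}$ satisfies $\hat{x}_{0}(\ell)\leq X$ for all $\ell$, so that $x_{1}^{A}$ dominates $\hat{x}_{0}$ from time $\lambda_{1}$ onward; Proposition \ref{block} then prevents $\hat{x}_{0}$, hence $x_{1}^{A}$, from falling back below $X-t^{1/4}$ by time $t-t^{\nu}$, which contradicts the one-point law except on an event of vanishing probability. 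This is precisely the mechanism you invoke --- correctly --- for the $B$ side, where the object to control is the excursion of the leftmost hole $H_{1}^{B}$ below $-t^{1/2+\varepsilon}$. In other words, the two sides are symmetric and the paper treats them by literally the same argument; your asymmetric assessment (``$A$ routine, $B$ is the technical core'') is where the write-up as proposed would fail, and repairing it amounts to running your own $B$-side blocking argument on the $A$ side as well.
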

\begin{proof}

 Consider first the case $R<M$. Define the collection of holes
 \begin{equation}\label{holes}
H_{n}^{B}(0)=n+\lfloor (p-q)t\rfloor, \quad n\geq 1.
\end{equation}
Note the $H_{n}^{B}$ perform an ASEP with (shifted) step initial data, where the holes jump to the right with probability $q<1/2$ and to the left with probability $p=1-q$.
Then 
\begin{equation}
\{H_{M-R}^{B}(t)<-R\}=\{x_{M}^{B}(t)\geq -R\}.
\end{equation}
We note
\begin{equation}
\begin{aligned}\label{need}
&\lim_{t \to \infty}\Pb (x_{M}^{A}(t-t^{\nu})+(p-q)(t^{\nu}+Ct^{1/2})\geq -R)=F_{M,p}(C)
\\&\lim_{t \to \infty}\Pb (H_{M-R}^{B}(t)<-R)=F_{M-R,p}(0).
\end{aligned}
\end{equation}
Define now $\tilde{\eta}^{A}_0=\eta_{0}^{A}$ . Let $0<\varepsilon<\nu-1/2$. 
Graphically construct $(\tilde{\eta}^{A}_s)_{s\geq 0}$ just like $(\eta^{A}_s)_{s\geq 0},$ using the same Poisson processes,  with the difference that all jumps
in the space-time region
\begin{align}
\{(i,s)\in \Z\times \R_{+}:i\geq-(p-q)t^{\nu}+t^{1/2+\varepsilon}, 0\leq s\leq t-t^{\nu}\}
\end{align}
are surpressed. Denote by $\tilde{x}_{M}^{A}(t-t^{\nu})$ the position of the $Mth$  particle (counted from right to left) of $\tilde{\eta}^{A}_{t-t^{\nu}}$.

Likewise, define  $\tilde{\eta}^{B}_0=\eta_{0}^{B}$ . 
Graphically construct $(\tilde{\eta}^{B}_s)_{s\geq 0}$ just like $(\eta^{B}_s)_{s\geq 0},$ using the same Poisson processes,  with the difference that all jumps
in the space-time region
\begin{align}
\{(i,s)\in \Z\times \R_{+}:i\leq-t^{1/2+\varepsilon}, 0\leq s\leq t\}
\end{align}
are surpressed. Denote by $\tilde{H}_{M-R}^{B}(t)$ the position of the $(M-R)$th  hole (counted from left to right) of $\tilde{\eta}^{B}_{t}$.

Then, $\tilde{H}_{M-R}^{B}(t),\tilde{x}_{M}^{A}(t-t^{\nu})$ are independent random variables.
Define the event
\begin{align}
G_{t}=\{\tilde{H}_{M-R}^{B}(t)\neq H_{M-R}^{B}(t)\}\cup \{ \tilde{x}_{M}^{A}(t-t^{\nu})\neq x_{M}^{A}(t-t^{\nu})\}.
\end{align} 
We show
\begin{align}\label{ja}
\lim_{t \to \infty}\Pb(G_{t})=0.
\end{align}
To see this,
note
\begin{align}
\{\tilde{x}_{M}^{A}(t-t^{\nu})\neq x_{M}^{A}(t-t^{\nu})\}\subseteq \{\sup_{0\leq \ell\leq t-t^{\nu}}x_{1}^{A}(\ell)\geq -(p-q)t^{\nu}+t^{1/2+\varepsilon}\}.
\end{align}
By Theorem \ref{convthm},
\begin{align}
\lim_{t \to \infty}\Pb (x_{1}^{A}(t-t^{\nu})\geq -(p-q)t^{\nu}+t^{1/2+\varepsilon}/2)\leq \lim_{s\to -\infty}F_{M,p}(s)=0
\end{align}
so that 
\begin{align}
&\lim_{t\to \infty}\Pb\left(\sup_{0\leq \ell\leq t-t^{\nu}}x_{1}^{A}(\ell)\geq -(p-q)t^{\nu}+t^{1/2+\varepsilon}\right)
\\&=\lim_{t\to \infty} \Pb\left (\sup_{0\leq \ell\leq t-t^{\nu}}x_{1}^{A}(\ell)\geq -(p-q)t^{\nu}+t^{1/2+\varepsilon},\,  x_{1}^{A}(t-t^{\nu})\leq -(p-q)t^{\nu}+t^{1/2+\varepsilon}/2 \right)
\end{align}
Start an ASEP from the initial data 
\begin{equation}
\hat{\eta}_{0}=\mathbf{1}_{i\geq -(p-q)t^{\nu}+t^{1/2+\varepsilon}}
\end{equation}
which is a shifted reversed step initial data. Denote by $\hat{x}_{0}(\ell)$ the position of the left most particle of $\hat{\eta}_{\ell}$  at time $\ell$.

Now on the event $\{\sup_{0\leq \ell\leq t-t^{\nu}}x_{1}^{A}(\ell)\geq -(p-q)t^{\nu}+t^{1/2+\varepsilon}\}$ there is a $\lambda_{1}\in [0,t-t^{\nu}]$ such that
$x_{1}^{A}(\lambda_{1})\geq  -(p-q)t^{\nu}+t^{1/2+\varepsilon}$. In particular, then 
\begin{equation}
x_{1}^{A}(\lambda_{1})\geq \hat{x}_{0}(\lambda_{1})
\end{equation}
which implies
\begin{equation}
x_{1}^{A}(\ell)\geq \hat{x}_{0}(\ell), \lambda_{1}\leq \ell\leq t-t^{\nu}.
\end{equation}
So we have shown
\begin{equation}
\{\sup_{0\leq \ell\leq t-t^{\nu}}x_{1}^{A}(\ell)\geq -(p-q)t^{\nu}+t^{1/2+\varepsilon}\}\subseteq \{x_{1}^{A}(t-t^{\nu})\geq \hat{x}_{0}(t-t^{\nu})\}.
\end{equation}
Thus
\begin{align*}
& \lim_{t\to \infty} \Pb\left (\sup_{0\leq \ell\leq t-t^{\nu}}x_{1}^{A}(\ell)\geq -(p-q)t^{\nu}+t^{1/2+\varepsilon},\,  x_{1}^{A}(t-t^{\nu})\leq -(p-q)t^{\nu}+t^{1/2+\varepsilon}/2 \right)
\\&\leq  \lim_{t\to \infty} \Pb\left ( \hat{x}_{0}(t-t^{\nu})\leq -(p-q)t^{\nu}+t^{1/2+\varepsilon}/2 \right)
\\&\leq  \lim_{t\to \infty} \Pb\left ( \hat{x}_{0}(t-t^{\nu})+(p-q)t^{\nu}-t^{1/2+\varepsilon}\leq - t^{1/2+\varepsilon}/2 \right)
\\&=  \lim_{t\to \infty} \Pb\left ( x_{0}^{\mathrm{-step}}(t-t^{\nu})\leq - t^{1/2+\varepsilon}/2 \right)
\\&=0
\end{align*}
where in the last step we used Proposition \ref{block}. So we have shown 
\begin{equation}
\lim_{t \to \infty}\Pb(\tilde{x}_{M}^{A}(t-t^{\nu})\neq x_{M}^{A}(t-t^{\nu}))=0.
\end{equation}
The proof of 
\begin{equation}
\lim_{t \to \infty}\Pb(\tilde{H}_{M-R}^{B}(t)\neq H_{M-R}^{B}(t))=0
\end{equation}
is almost identical, one notes 
\begin{align}
\{(\tilde{H}_{M-R}^{B}(t)\neq H_{M-R}^{B}(t)\}\subseteq \{\inf_{0\leq \ell\leq t}H_{1}^{B}(\ell)\leq -t^{1/2+\varepsilon}\}.
\end{align}
and deduces  $\lim_{t\to \infty}\Pb(\{\inf_{0\leq \ell\leq t}H_{1}^{B}(\ell)\leq -t^{1/2+\varepsilon}\})=0$ from 
\begin{align}
\lim_{t \to \infty}\Pb (H_{1}^{B}(t)\leq -t^{1/2+\varepsilon}/2)=0
\end{align}

So we have shown  \eqref{ja}. We conclude by computing
\begin{align*}
&\lim_{t\to \infty}\Pb(\min\{x_{M}^{A}(t-t^{\nu})+(p-q)(t^{\nu}+Ct^{1/2}),x_{M}^{B}(t)\}\geq -R)
\\&=\lim_{t\to \infty}\Pb(x_{M}^{A}(t-t^{\nu})+(p-q)(t^{\nu}+Ct^{1/2})\geq -R, H_{M-R}^{B}(t)<-R)
\\&=\lim_{t\to \infty}\Pb(\{x_{M}^{A}(t-t^{\nu})+(p-q)(t^{\nu}+Ct^{1/2})\geq -R\}\cap \{ H_{M-R}^{B}(t)<-R\}\cap G_{t}^{c})
\\&=\lim_{t\to \infty}\Pb(\{\tilde{x}_{M}^{A}(t-t^{\nu})+(p-q)(t^{\nu}+Ct^{1/2})\geq -R\}\cap \{ \tilde{H}_{M-R}^{B}(t)<-R\}\cap G_{t}^{c})
\\&=\lim_{t\to \infty}\Pb(\{\tilde{x}_{M}^{A}(t-t^{\nu})+(p-q)(t^{\nu}+Ct^{1/2})\geq -R\}\cap \{ \tilde{H}_{M-R}^{B}(t)<-R\})
\\&= \lim_{t\to \infty}\Pb(\tilde{x}_{M}^{A}(t-t^{\nu})+(p-q)(t^{\nu}+Ct^{1/2})\geq -R)\Pb(\tilde{H}_{M-R}^{B}(t)<-R)
\\&= \lim_{t\to \infty}\Pb(\{\tilde{x}_{M}^{A}(t-t^{\nu})+(p-q)(t^{\nu}+Ct^{1/2} \})\geq -R\}\cap G_{t}^{c})  \Pb(\{\tilde{H}_{M-R}^{B}(t)<-R\}\cap G_{t}^{c})
\\&= \lim_{t\to \infty}\Pb(\{x_{M}^{A}(t-t^{\nu})+(p-q)(t^{\nu}+Ct^{1/2} \})\geq -R\}\cap G_{t}^{c})  \Pb(\{H_{M-R}^{B}(t)<-R\}\cap G_{t}^{c})
\\&= \lim_{t\to \infty}\Pb(\{x_{M}^{A}(t-t^{\nu})+(p-q)(t^{\nu}+Ct^{1/2} \})\geq -R\})  \Pb(\{H_{M-R}^{B}(t)<-R\})
\\&=F_{M,p}(C)F_{M-R,p}(0),
\end{align*}
where for the last identity we used \eqref{need}.

Finally, consider the case $R\geq M$. Note that then $\Pb(x_{M}^{B}(t)\geq -R)=1$ and thus 
\begin{align*}&\lim_{t\to \infty}\Pb(\min\{x_{M}^{A}(t-t^{\nu})+(p-q)(t^{\nu}+Ct^{1/2}),x_{M}^{B}(t)\}\geq -R)\\&=\lim_{t\to \infty}\Pb(x_{M}^{A}(t-t^{\nu})+(p-q)(t^{\nu}+Ct^{1/2})\geq -R)\\&=F_{M,p}(C).
\end{align*}
\end{proof}
We recall the following elementary Lemma. We denote by $"\Rightarrow"$ convergence in distribution.
\begin{lem}\label{elemlem}
Let $(X_{n})_{n\geq 1}, (\tilde{X}_{n})_{n\geq 1}$ be sequences of random variables such that $X_{n}\geq \tilde{X}_{n}$.Let $X_{n } \Rightarrow D, \tilde{X}_{n}\Rightarrow D,$ where $D$ is a probability distribution. Then $X_{n}- \tilde{X}_{n}\Rightarrow0$.
\end{lem}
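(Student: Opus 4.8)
The plan is to reduce the statement to convergence in probability and then exploit the ordering together with the common limit. Since $X_{n}\geq \tilde{X}_{n}$, the variable $Y_{n}:=X_{n}-\tilde{X}_{n}$ is nonnegative, and for a nonnegative sequence, convergence in distribution to the constant $0$ is equivalent to $\Pb(Y_{n}>\delta)\to 0$ for every $\delta>0$. Thus it suffices to bound $\Pb(X_{n}>\tilde{X}_{n}+\delta)$.

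The key observation is that the ordering $X_{n}\geq \tilde{X}_{n}$ gives the inclusion $\{X_{n}\leq a\}\subseteq\{\tilde{X}_{n}\leq a\}$ for every $a\in\R$. Writing $F$ for the distribution function of $D$ and using $X_{n},\tilde{X}_{n}\Rightarrow D$, for every continuity point $a$ of $F$ we obtain
\begin{equation*}
\Pb(\tilde{X}_{n}\leq a< X_{n})=\Pb(\tilde{X}_{n}\leq a)-\Pb(X_{n}\leq a)\to F(a)-F(a)=0
\end{equation*}
as $n\to\infty$; in words, the nonnegative gap between the two distribution functions vanishes at each continuity point of $F$.

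Next I would fix $\delta>0$ and $\e>0$ and choose finitely many continuity points $a_{0}<a_{1}<\cdots<a_{k}$ of $F$ with consecutive spacing $a_{i+1}-a_{i}<\delta$, and with $a_{0}$ so negative and $a_{k}$ so positive that $F(a_{0})<\e$ and $1-F(a_{k})<\e$; this is possible because $F$ has at most countably many discontinuities. The geometric heart of the argument is that on the event $\{X_{n}-\tilde{X}_{n}>\delta\}\cap\{a_{0}<\tilde{X}_{n}\}\cap\{X_{n}\leq a_{k}\}$ the two points $\tilde{X}_{n}$ and $X_{n}$ cannot lie in a common grid cell $(a_{i},a_{i+1}]$, whose length is less than $\delta$, so some grid point is sandwiched, $\tilde{X}_{n}\leq a_{i}<X_{n}$. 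This yields
\begin{equation*}
\{X_{n}-\tilde{X}_{n}>\delta\}\subseteq\{\tilde{X}_{n}\leq a_{0}\}\cup\{X_{n}>a_{k}\}\cup\bigcup_{i=1}^{k-1}\{\tilde{X}_{n}\leq a_{i}<X_{n}\}.
\end{equation*}
Taking $\limsup_{n}$ and combining the previous display with the tail choices, the right-hand side is bounded by $2\e$; letting $\e\downarrow 0$ gives $\Pb(X_{n}-\tilde{X}_{n}>\delta)\to 0$, which is what we wanted.

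I do not expect a genuine obstacle, as the statement is elementary; the only point requiring a little care is the grid step, namely checking that the finitely many continuity points can be chosen with both small spacing and small tail mass, and verifying that the sandwiching really forces one of the finitely many events $\{\tilde{X}_{n}\leq a_{i}<X_{n}\}$ to occur. An essentially equivalent alternative would be to argue directly with the distribution functions $G_{n}\leq\tilde{G}_{n}$ of $X_{n}$ and $\tilde{X}_{n}$ and their common limit $F$, but the event-based formulation above seems cleanest.
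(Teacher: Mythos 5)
Your proof is correct. Note that the paper itself offers no proof of this lemma at all---it merely ``recalls'' it as elementary (it is essentially Lemma 2.1 in the slow-decorrelation literature the paper cites), so there is nothing to compare against; your argument fills in exactly the details one would want. The two steps that need care both check out: (i) the identity $\Pb(\tilde{X}_{n}\leq a < X_{n})=\Pb(\tilde{X}_{n}\leq a)-\Pb(X_{n}\leq a)$ is legitimate precisely because the ordering gives the inclusion $\{X_{n}\leq a\}\subseteq\{\tilde{X}_{n}\leq a\}$, and (ii) in the sandwiching step, if $a_{0}<\tilde{X}_{n}$, $X_{n}\leq a_{k}$ and $X_{n}-\tilde{X}_{n}>\delta$, then taking $i$ with $a_{i}<\tilde{X}_{n}\leq a_{i+1}$ one gets $X_{n}>\tilde{X}_{n}+\delta>a_{i}+\delta>a_{i+1}$, and the constraint $X_{n}\leq a_{k}$ forces $i+1\leq k-1$, so the sandwiched grid point really does lie in the finite union you wrote---the index range $1,\ldots,k-1$ is exactly right. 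Combined with the fact that convergence in distribution to a constant is equivalent to convergence in probability, and that all grid points are continuity points of $F$ with $F(a_{0})<\e$, $1-F(a_{k})<\e$, the $\limsup$ bound of $2\e$ follows and the proof is complete.
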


Next we prove the following slow-decorrelation type statement.
\begin{prop}\label{prop0}
Let $\nu \in (1/2,1)$ and $\varepsilon>0$. Then  
\begin{equation}
\lim_{t\to \infty}\Pb\left(\left|x_{M}^{A}(t)-x_{M}^{A}(t-t^{\nu})-(p-q)t^{\nu}\right|\geq \varepsilon t^{1/2}\right)=0.
\end{equation} 
\end{prop}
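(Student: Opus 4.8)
The plan is to strip off the $t$-dependent shift, reformulate the statement as the convergence to $0$ of the difference of two centered and rescaled positions, show that both positions converge to a common limit law, and finally close the gap between them via the sandwich Lemma \ref{elemlem} together with a monotone slow-decorrelation coupling.

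First I would reduce to step initial data. Since $x_n^{A}(0)=-n-\lfloor(p-q)t\rfloor=x_n^{\mathrm{step}}(0)-\lfloor(p-q)t\rfloor$ and the ASEP dynamics commute with spatial translations, one can couple $(\eta^{A}_\ell)$ to the step process $(\eta^{\mathrm{step}}_\ell)$ by translating the graphical construction by $\lfloor(p-q)t\rfloor$, so that $x_M^{A}(s)=x_M^{\mathrm{step}}(s)-\lfloor(p-q)t\rfloor$ simultaneously for all $s\ge 0$. The constant shift cancels in the increment, so with $\tau:=t-t^{\nu}$ it suffices to prove
\[
\lim_{t\to\infty}\Pb\big(\,|x_M^{\mathrm{step}}(t)-x_M^{\mathrm{step}}(\tau)-(p-q)t^{\nu}|\ge \varepsilon t^{1/2}\,\big)=0.
\]

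Next I set $X_t:=t^{-1/2}\big(x_M^{\mathrm{step}}(t)-(p-q)t\big)$ and $\tilde X_t:=t^{-1/2}\big(x_M^{\mathrm{step}}(\tau)-(p-q)\tau\big)$, so that $X_t-\tilde X_t=t^{-1/2}\big(x_M^{\mathrm{step}}(t)-x_M^{\mathrm{step}}(\tau)-(p-q)t^{\nu}\big)$ is exactly the quantity to be controlled, and the claim becomes $X_t-\tilde X_t\to 0$ in probability. Theorem \ref{convthm} gives $X_t\Rightarrow D$, where $D$ is the law of $-(p-q)\xi$ with $\xi$ distributed according to $F_{M,p}$. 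For $\tilde X_t$, I apply Theorem \ref{convthm} at the time $\tau$ (which also tends to infinity) to get $\tau^{-1/2}\big(x_M^{\mathrm{step}}(\tau)-(p-q)\tau\big)\Rightarrow -(p-q)\xi$, and then multiply by $\tau^{1/2}/t^{1/2}=(1-t^{\nu-1})^{1/2}\to 1$ (this is where $\nu<1$ is used); by Slutsky's theorem $\tilde X_t\Rightarrow D$ as well. Thus $X_t$ and $\tilde X_t$ converge to the \emph{same} distribution $D$.

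It remains to upgrade ``same limit'' to ``difference tends to $0$'', which is the role of Lemma \ref{elemlem}: once I exhibit a coupling in which $X_t\ge\tilde X_t$ (or the reverse) holds pathwise while each marginal keeps the limit $D$, the lemma yields $X_t-\tilde X_t\Rightarrow 0$, and convergence in distribution to the constant $0$ is convergence in probability, finishing the proof. This monotone coupling is the main obstacle. The naive attempts fail: a pathwise inequality $X_t\ge\tilde X_t$ would assert that the tagged particle always advances by at least its mean drift $(p-q)t^{\nu}$ over $[\tau,t]$, which is false, and the basic attractive coupling with a spatially shifted step process only compares two systems at equal times, not across the times $\tau$ and $t$, so it cannot pin the cross-time comparison down to the fluctuation scale $o(t^{1/2})$. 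The resolution is the slow-decorrelation coupling of \cite{Fer08}: one constructs auxiliary processes sandwiching $x_M^{\mathrm{step}}(t)$ and $x_M^{\mathrm{step}}(\tau)+(p-q)t^{\nu}$ whose increment along the characteristic direction over the short interval $[\tau,t]$ has fluctuations of order $o(t^{1/2})$, so that both bounds still converge to the common limit $D$. The one-point convergence along the characteristic that this scheme requires is precisely what Theorem \ref{convthm} supplies, so the argument reduces to verifying the hypotheses of the slow-decorrelation principle and invoking Lemma \ref{elemlem}.
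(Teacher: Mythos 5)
Your setup (reduction to step initial data, the two rescaled quantities, convergence of both to the same law via Theorem \ref{convthm} and Slutsky, and the plan to finish with Lemma \ref{elemlem}) matches the paper's strategy exactly. But the proof has a genuine gap precisely at the point you yourself flag as ``the main obstacle'': the monotone coupling is never constructed, only asserted to exist by appeal to the slow-decorrelation coupling of \cite{Fer08}. That appeal does not close the argument. The couplings in \cite{Fer08} rest on the superadditivity of last passage percolation, i.e.\ on the LPP representation available for TASEP and PNG; for the partially asymmetric case $p\in(1/2,1)$ there is no such representation, and the present proposition \emph{is} the slow-decorrelation statement for ASEP that one would want to quote --- so deferring to the literature here is essentially circular. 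The paper's actual resolution is a concrete restart coupling inside ASEP: at time $\tau=t-t^{\nu}$, start a fresh ASEP from the fully packed configuration $\tilde{\eta}_{\tau}=\mathbf{1}_{i\leq x_{M}^{A}(\tau)}$, run it with the same Poisson clocks, and let $\tilde{x}_{1}$ be its rightmost particle. Removing the particles ahead of the tagged particle and fully packing the sites behind it can only push it to the right, which gives the pathwise one-sided bound $x_{M}^{A}(t)\leq \tilde{x}_{1}(t^{\nu})$ of \eqref{upperbound}; by the Markov property the increment $\tilde{x}_{1}(t^{\nu})-x_{M}^{A}(\tau)$ is equal in distribution to $x_{1}^{\mathrm{step}}(t^{\nu})$, whose fluctuations are $O(t^{\nu/2})=o(t^{1/2})$ by the tightness contained in Theorem \ref{convthm}. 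This third, auxiliary quantity is what Lemma \ref{elemlem} is applied to (together with $x_{M}^{A}(t)$); no two-sided ``sandwich'' is needed, since one-sided domination plus identical limit laws already forces the difference to vanish.

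Two smaller remarks. First, your dismissal of ``the basic attractive coupling with a spatially shifted step process'' because it ``only compares two systems at equal times'' is too hasty: the coupling that works is exactly an equal-time attractive coupling, except that the comparison process is restarted at the random time-$\tau$ data $\mathbf{1}_{i\leq x_{M}^{A}(\tau)}$ rather than shifted at time $0$; the cross-time information is then recovered from the Markov property, not from the coupling itself. Second, even granting a black-box slow-decorrelation principle, one would still have to verify its hypotheses for PASEP, which amounts to producing precisely the monotone restart bound above --- so the missing step cannot be avoided.
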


\begin{proof}
Consider an ASEP with step initial data which starts at time $t-t^{\nu}$ and has its rightmost particle at position $x_{M}^{A}(t-t^{\nu})$:
Set $\tilde{\eta}_{s}=\tilde{\eta}_{t-t^{\nu}+s},s\leq t^{\nu},$ and $\tilde{\eta}_{t-t^{\nu}}=\mathbf{1}_{i\leq x_{M}^{A}(t-t^{\nu})}$.
Denote by $\tilde{x}_{1}(t^{\nu})$ the position  of the right most particle of  $\tilde{\eta}_{t^{\nu}}$.
Then we have 
\begin{equation} \label{upperbound}
x_{M}^{A}(t)\leq x_{M}^{A}(t-t^{\nu})+\tilde{x}_{1}(t^{\nu})- x_{M}^{A}(t-t^{\nu})
\end{equation}
Now
\begin{equation}\label{dashier}\tilde{x}_{1}(t^{\nu})- x_{M}^{A}(t-t^{\nu})=^{d}x_{1}^{\mathrm{step}}(t^{\nu})\end{equation}
where $=^{d}$ denotes equality in distribution and $x_{1}^{\mathrm{step}}(t^{\nu})$ is the position at time $t^{\nu}$ of the right most particle 
in ASEP started with step initial data $x_{n}^{\mathrm{step}}(0)=-n,n\geq 1$.
Now by Theorem \ref{convthm} we have in particular that $((x_{1}^{\mathrm{step}}(t^{\nu})-(p-q)t^{\nu})t^{-\nu/2}),t\geq 0$ is tight,
which together with  \eqref{dashier} implies
\begin{equation}\label{eins}
\lim_{t \to \infty} \Pb\left(|\tilde{x}_{1}(t^{\nu})- x_{M}^{A}(t-t^{\nu})-(p-q)t^{\nu} |t^{-1/2}\geq \varepsilon/2\right)=0.
\end{equation}

Now by  Theorem \ref{convthm}
\begin{align}\label{zwei}
&\frac{x_{M}^{A}(t-t^{\nu})+(p-q)t^{\nu}}{t^{1/2}(p-q)}\Rightarrow F_{M,p}
\quad  \frac{x_{M}^{A}(t)}{t^{1/2}(p-q)}\Rightarrow F_{M,p}
\end{align}
So by \eqref{eins},
\begin{align}
&\frac{x_{M}^{A}(t-t^{\nu})+(p-q)t^{\nu}}{t^{1/2}(p-q)}+      \frac{\tilde{x}_{1}(t^{\nu})-x_{M}^{A}(t-t^{\nu})-(p-q)t^{\nu}}{t^{1/2}(p-q)}    \Rightarrow F_{M,p}
\end{align}
Thus we can apply Lemma \ref{elemlem} to \eqref{upperbound}, 
which then implies 
\begin{align}
\frac{x_{M}^{A}(t-t^{\nu})+(p-q)t^{\nu}}{t^{1/2}(p-q)}- \frac{x_{M}^{A}(t)}{t^{1/2}(p-q)} \Rightarrow 0,
\end{align}
using \eqref{eins}.
\end{proof}

\section{Proof of Theorem \ref{main}}

Here we combine all previously given results to prove Theorem \ref{main}.
\begin{proof}[Proof of Theorem \ref{main}]
We start by proving \eqref{cutoff} for $s<0$.
We show the stronger statement $\lim_{t\to \infty}\Pb(x_{M}^{B}(t)>-st^{1/2})=0$ for $s<0.$
Consider the collection of holes from \eqref{holes}.
Then 
\begin{equation}
\{H_{-st^{1/2}+M}^{B}(t)<-st^{1/2}\}=\{x_{M}^{B}(t)\geq -st^{1/2}\}.
\end{equation}
Now we have
\begin{equation}
\Pb(H_{-st^{1/2}+M}^{B}(t)<-st^{1/2})=\Pb(x^{\mathrm{step}}_{-st^{1/2}+M}>(p-q)t+st^{1/2}).
\end{equation} 
Note  \eqref{two1}  implies that for  $f:\R_{>0}\to \R_{>0}$  an increasing function such that $\lim_{t\to \infty}f(t)=\infty$
\begin{equation}\label{two}
\lim_{t\to \infty}\Pb(x_{\lfloor f(t)\rfloor }^{\mathrm{step}}(t)>(p-q)t-st^{1/2})=0.
\end{equation}
Thus 
\begin{equation}
\lim_{t \to \infty} \Pb(x_{M}^{B}(t)>-st^{1/2})=\lim_{ t \to \infty}\Pb(x^{\mathrm{step}}_{-st^{1/2}+M}>(p-q)t+st^{1/2})=0
\end{equation}
by \eqref{two}.

Next we prove \eqref{cutoff} for $s>0$. Denote by $x_{-n}^{-\mathrm{step}}(0)=n,n\geq 0$ the reversed step initial data, and construct $(x_{-n}^{-\mathrm{step}}(\ell))_{\ell\geq 0, n\geq 0}$ on the same probability space as  $x_{M}(t),x_{M}^{A}(t),x_{M}^{B}(t)$.
By  Theorem \ref{convthm} and since $x_{M}(t)\leq x_{M}^{A}(t)$ we have
\begin{align}
F_{M,p}(s)=&\lim_{t \to \infty}\Pb(x_{M}^{A}(t)\geq  -(p-q)st^{1/2},x_{M}(t)< -(p-q)st^{1/2})
\\&+\lim_{t \to \infty}\Pb(x_{M}^{A}(t)\geq  -(p-q)st^{1/2},x_{M}(t)\geq  -(p-q)st^{1/2})
\\& =\lim_{t \to \infty}\Pb(x_{M}^{A}(t)\geq  -(p-q)st^{1/2},x_{M}(t)< -(p-q)st^{1/2})
\\&+\lim_{t \to \infty}\Pb(x_{M}(t)\geq  -(p-q)st^{1/2}).
\end{align} 
It thus suffices to prove
\begin{equation}
\lim_{t \to \infty}\Pb(x_{M}(t)< -(p-q)st^{1/2}, x_{M}^{A}(t)\geq  -(p-q)st^{1/2})=0.
\end{equation} 


Define 
\begin{align}
&\tau_{0}=0
\\&\tau_{i}=\inf\{\ell: x_{i}(\ell)\neq x_{i}^{A}(\ell)\}, \, i \geq 1.
\end{align} 
We show
\begin{align}\label{omit}
\{x_{M}(t)\neq x_{M}(t)\}\subseteq \mathcal{B}=\{ 0=\tau_{0}<\tau_{1}<\cdots<\tau_{M}\leq t, x_{i-1}(\tau_{i}) -x_{i}(\tau_{i})=1,i=1,\ldots,M\}
\end{align}
To see \eqref{omit},  note $0<\tau_{M}\leq t$ on $\{x_{M}(t)\neq x_{M}^{A}(t)\}.$ Recall further $x_{M}(\ell)\leq x_{M}^{A}(\ell)$ for all $\ell\geq 0$. Then we have \begin{equation*}x_{M}(\tau_{M})\neq x_{M}^{A}(\tau_{M}),x_{M}(\tau_{M}^{-})= x_{M}^{A}(\tau_{M}^{-}).\end{equation*}
Now $x_{M}(\tau_{M}^{-})= x_{M}^{A}(\tau_{M}^{-})$ implies  $x_{M+1}(\tau_{M}^{-})= x_{M+1}^{A}(\tau_{M}^{-})$ [Asumme to the contrary $x_{M}(\tau_{M}^{-})= x_{M}^{A}(\tau_{M}^{-}), x_{M+1}(\tau_{M}^{-})\neq  x_{M+1}^{A}(\tau_{M}^{-})$ both hold.  Since  $x_{M+1}(\tau_{M}^{-})\neq  x_{M+1}^{A}(\tau_{M}^{-})$ is equivalent to 
$x_{M+1}(\tau_{M}^{-})<  x_{M+1}^{A}(\tau_{M}^{-}),$ we have that $x_{M+1}(\tau_{M}^{-})\neq  x_{M+1}^{A}(\tau_{M}^{-})$ implies  \begin{equation*}x_{M+1}(\tau_{M}^{-})<  x_{M+1}^{A}(\tau_{M}^{-})< x_{M}^{A}(\tau_{M}^{-})= x_{M}(\tau_{M}^{-}).\end{equation*}But this cannot happen since 
then $\eta_{\tau_{M}^{-}}^{A}(x_{M+1}^{A}(\tau_{M}^{-}))=1>\eta_{\tau_{M}^{-}}(x_{M+1}^{A}(\tau_{M}^{-}))$ in contradiction to $\eta_{t}^{A}\leq \eta_{t}$ for all $t$].
Now that $x_{M}(\tau_{M}^{-})= x_{M}^{A}(\tau_{M}^{-}), x_{M+1}(\tau_{M}^{-})= x_{M+1}^{A}(\tau_{M}^{-})$  hold implies that the only way the discrepency  $x_{M}(\tau_{M})\neq  x_{M}^{A}(\tau_{M})$
can be created is by a jump to the right of $x_{M}^{A}$ that $x_{M}$ does not make (the other possibility to create this discrepency would be by a jump of $x_{M}$ to the left that $x_{M}^{A}$ does not make, but since 
$x_{M+1}(\tau_{M}^{-})= x_{M+1}^{A}(\tau_{M}^{-}),$ $x_{M}$ and $x_{M}^{A}$ can only jump together to the left at time $\tau_{M}$). This shows that at time $\tau_{M}$ a jump of $x_{M}$ has been supressed by the presence of $x_{M-1}$.
Furthermore, $x_{M-1}(\tau_{M})<x_{M-1}^{A}(\tau_{M})$ showing that
\begin{equation}
0<\tau_{M-1}<\tau_{M}.
\end{equation}
 Repeating the preceeding argument, we see that at time $\tau_{M-1}$ a jump of $x_{M-1}$ was supressed by the presence of $x_{M-2}.$ Iteratively, we obtain $0<\tau_{1}< \cdots<\tau_{M} \leq t $ and  that at time $\tau_{i}$, a jump of $x_{i}$ is supressed by the presence of $x_{i-1}$, $i=1,\ldots,M$. In particular, \eqref{omit} holds.
 Define
 \begin{equation}
 \eta_{\tau_{i}}^{i}=\mathbf{1}_{\{i\geq x_{i}(\tau_{i})\}}
 \end{equation}
 and the ASEP
 \begin{equation}
 \eta_{s}^{i}= \eta_{\tau_{i}+s}^{i}
 \end{equation}
 Denote by $x_{0}^{i}(s)$ the position of the left most particle of $ \eta_{s}^{i}$ .
 Then
 \begin{equation}
 x_{0}^{i}(s)\leq  x_{i}(\tau_{i}+s)
 \end{equation}
 and  $(x_{0}^{i}(s)-x_{0}^{i}(0))_{s\geq 0}$ is just an ASEP started from reversed step initial data $\eta^{-\mathrm{step}}=\mathbf{1}_{\{i\geq 0 \}}$.
Now we have 
\begin{align*}
&\lim_{t \to \infty}\Pb(x_{M}(t)< -(p-q)st^{1/2}, x_{M}^{A}(t)\geq  -(p-q)st^{1/2})
\\&=\lim_{t \to \infty}\Pb(x_{M}(t)< -(p-q)st^{1/2}, x_{M}^{A}(t)\geq  -(p-q)st^{1/2},x_{M}(t)\neq x_{M}^{A}(t))
\\&\leq \lim_{t \to \infty}\Pb(\{x_{M}(t)< -(p-q)st^{1/2}\}\cap \mathcal{B} ).
\end{align*} 
We show
\begin{equation}\label{label}
\Pb(\{x_{M}(t)< -(M+1)t^{1/4}\}\cap \mathcal{B} )\leq (M+1)C_{1}e^{-C_{2}t^{1/8}}.
\end{equation}
Define the event 
\begin{equation}
E_{i}=\{\inf_{\tau_{i}\leq \ell \leq t}x_{i}(\ell) -x_{i-1}(\tau_{i}) <-t^{1/4}\}\cap\mathcal{B}
\end{equation}
We have 
\begin{equation}\label{bbb}
\Pb(E_{i})\leq C_{1}e^{-C_{2}t^{1/8}}
\end{equation}
since 
\begin{equation}
x_{i}(\tau_{i}+s) -x_{i-1}(\tau_{i}) \geq x_{0}^{i}(s)-x_{0}^{i}(0)-1
\end{equation} and  $x_{0}^{i}(s)-x_{0}^{i}(0),s\geq 0$ is just an ASEP started from reversed step initial data, so \eqref{bbb} follows from  Proposition \ref{block}.
Next we make the crucial observation
\begin{equation}
\{x_{M}(t)< -(p-q)st^{1/2}\}\cap \mathcal{B}\setminus(\cup_{i=0}^{M}E_{i})=\emptyset.
\end{equation}
This implies  
\begin{equation}
\Pb(\{x_{M}(t)< -(p-q)st^{1/2}\}\cap \mathcal{B} )\leq \Pb(\cup_{i=0}^{M}E_{i})\leq (M+1)C_{1}e^{-C_{2}t^{1/8}}.
\end{equation}
finishing the proof of \eqref{cutoff} for $s>0$.


Next we come to \eqref{prodform}.
We define the event 
\begin{equation}
A_{t}=\{ \left| x_{M}^{A}(t)-x_{M}^{A}(t-t^{\nu})-(p-q)t^{\nu}\right| \leq \varepsilon t^{1/2}\}
\end{equation}
Let first $R\geq M$.
Then 
\begin{align}
\lim_{t\to \infty} \Pb (Y_{M}\geq -R)\leq \lim_{t\to \infty} \Pb (\{Y_{M}\geq -R\}\cap A_{t})+\Pb(A_{t}^{c}).
\end{align}
Now 
\begin{align}
\{Y_{M}\geq -R\}\cap A_{t}&=\{\min\{x_{M}^{A}(t)-x_{M}^{A}(t-t^{\nu})-(p-q)t^{\nu}+x_{M}^{A}(t-t^{\nu})+(p-q)t^{\nu},x_{M}^{B}(t)\}\geq -R\}\cap A_{t}\\&\subseteq\{\min\{\varepsilon t^{1/2}+x_{M}^{A}(t-t^{\nu})+(p-q)t^{\nu},x_{M}^{B}(t)\}\geq -R\}\cap A_{t}
\end{align}
and likewise
\begin{align}
\{Y_{M}\geq -R\}\cap A_{t} \supseteq\{\min\{-\varepsilon t^{1/2}+x_{M}^{A}(t-t^{\nu})+(p-q)t^{\nu},x_{M}^{B}(t)\}\geq -R\}\cap A_{t}
\end{align}
Thus
\begin{equation}
\begin{aligned}\label{fin1}
\lim_{t\to \infty} \Pb (Y_{M}\geq -R)&\leq \lim_{t\to \infty} \Pb (\{Y_{M}\geq -R\}\cap A_{t})+\Pb(A_{t}^{c})
\\&\leq   \lim_{t\to \infty} \Pb(\{\min\{\varepsilon t^{1/2}+x_{M}^{A}(t-t^{\nu})+(p-q)t^{\nu},x_{M}^{B}(t)\}\geq -R\}\cap A_{t})+\Pb(A_{t}^{c})
\\& \leq  \lim_{t\to \infty} \Pb(\{\min\{\varepsilon t^{1/2}+x_{M}^{A}(t-t^{\nu})+(p-q)t^{\nu},x_{M}^{B}(t)\}\geq -R\})+\Pb(A_{t}^{c})
\\&=F_{M,p}(\varepsilon)
\end{aligned}
\end{equation}
where the last equality follows from Propositions \ref{indepprop} and \ref{prop0}.
Likewise,
\begin{equation}\label{fin2}
\begin{aligned}
\lim_{t\to \infty} \Pb (Y_{M}\geq -R)&\geq \lim_{t\to \infty} \Pb (\{Y_{M}\geq -R\}\cap A_{t})
\\&\geq   \lim_{t\to \infty} \Pb(\{\min\{-\varepsilon t^{1/2}+x_{M}^{A}(t-t^{\nu})+(p-q)t^{\nu},x_{M}^{B}(t)\}\geq -R\}\cap A_{t})
\\& \geq  \lim_{t\to \infty} \Pb(\{\min\{-\varepsilon t^{1/2}+x_{M}^{A}(t-t^{\nu})+(p-q)t^{\nu},x_{M}^{B}(t)\}\geq -R\})-\Pb(A_{t}^{c})
\\&=F_{M,p}(-\varepsilon)
\end{aligned}
\end{equation}
Since $\varepsilon>0$ is arbitrary, it follows
\begin{equation}
\lim_{t\to \infty} \Pb (Y_{M}\geq -R)=F_{M,p}(0).
\end{equation}
Let now $R<M.$ We have the same inequalities  as in \eqref{fin1},\eqref{fin2}, except that they now yield 
\begin{equation}
F_{M,p}(-\varepsilon)F_{M-R,p}(0)\leq \lim_{t\to \infty} \Pb (Y_{M}\geq -R)\leq F_{M,p}(\varepsilon)F_{M-R,p}(0),
\end{equation}
implying 
\begin{equation}
\lim_{t\to \infty} \Pb (Y_{M}\geq -R)=F_{M,p}(0)F_{M-R,p}(0).
\end{equation}
\end{proof}

\bibliographystyle{myhalpha}
\bibliography{Biblio.bib}

\end{document}